\newcommand*\circled[1]{\tikz[baseline=(char.base)]{
            \node[shape=circle,draw,inner sep=2pt] (char) {#1};}}
\numberwithin{equation}{section}
\numberwithin{figure}{section}
\theoremstyle{plain}
\newtheorem*{cor*}{\protect\corollaryname}
\theoremstyle{plain}
\newtheorem{thm}{\protect\theoremname}[section]
\theoremstyle{definition}
\newtheorem{defn}[thm]{\protect\definitionname}
\theoremstyle{question}
\theoremstyle{remark}
\newtheorem{rem}[thm]{\protect\remarkname}
\theoremstyle{plain}
\newtheorem{prop}[thm]{\protect\propositionname}
\theoremstyle{plain}
\newtheorem{lem}[thm]{\protect\lemmaname}
\theoremstyle{plain}
\newtheorem{cor}[thm]{\protect\corollaryname}
\newtheorem{ex}[thm]{Example}
\theoremstyle{remark}
\theoremstyle{plain}
\newtheorem{maintheorem}{Theorem}
\newtheorem{maincorollary}[maintheorem]{Corollary}
\theoremstyle{Theorem A}
\theoremstyle{Theorem B}
\theoremstyle{Theorem C}
\theoremstyle{Theorem D}
\theoremstyle{Theorem E}
\DeclareMathOperator{\Leb}{Leb}
\DeclareMathOperator{\ess}{ess}
\numberwithin{equation}{section}
\numberwithin{equation}{section}
\numberwithin{figure}{section}
 \let\footnote=\endnote
\theoremstyle{definition}
\def\R{\mathbb{R}}
\def\N{\mathbb{N}}
\def\Z{\mathbb{Z}}
\keywords{}
\subjclass[2000]{}
\def\R{\mathbb{R}}
\def\A{\mathcal{A}}
\def\B{\mathcal{B}}
\def\A{\mathcal{A}}
\def\glr{\text{GL}_d(\R)}
\def\gl2{\text{GL}_2(\R)}
  \providecommand{\corollaryname}{Corollary}
  \providecommand{\definitionname}{Definition}
  \providecommand{\lemmaname}{Lemma}
  \providecommand{\propositionname}{Proposition}
  \providecommand{\remarkname}{Remark}
  \providecommand{\theoremname}{Theorem}
\providecommand{\theoremname}{Theorem}
\definecolor{lime}{HTML}{A6CE39}
\DeclareRobustCommand{\orcidicon}{
	\begin{tikzpicture}
	\draw[lime, fill=lime] (0,0) 
	circle [radius=0.16] 
	node[white] {{\fontfamily{qag}\selectfont \tiny ID}};
	\draw[white, fill=white] (-0.0625,0.095) 
	circle [radius=0.007];
	\end{tikzpicture}
	\hspace{-2mm}
}
\renewcommand{\le}{\leqslant}
\renewcommand{\leq}{\leqslant}
\renewcommand{\geq}{\geqslant}
\renewcommand{\ge}{\geqslant}
\author{Reza Mohammadpour \orcidA{} }
\address{Reza Mohammadpour, Department of Mathematics, Uppsala University, Box 480, SE-75106, Uppsala, SWEDEN.}
\email{reza.mohammadpour@math.uu.se}
\date{\today}
\subjclass[2010]{37D25, 37A05, 37C40, 37D30}
\keywords{Partially hyperbolic diffeomorphisms, Observable measures, Lyapunov exponents, 
Non-uniform expansion, SRB measures}%
\begin{document}
\title[Lyapunov exponents and SRB measures for partially hyperbolic systems]{On positive Lyapunov exponents and SRB measures for partially hyperbolic systems}
\maketitle

\begin{abstract}
In this paper we consider $C^{1}$ diffeomorphisms on compact Riemannian
manifolds admitting a dominated splitting $E^{cs} \oplus E^{cu}$. First, we  prove that the smallest Lyapunov exponent along $E^{cu}$, computed with respect to the Lebesgue measure, is computable using observable measures. Then we show that if the Lyapunov exponents along $E^{cu}$ are positive Lebesgue almost everywhere and $E^{cu}$ admits a finest 1-dominated splitting  on the support of an ergodic observable measure then $f$ is non-uniformly expanding along $E^{cu}$. As a byproduct, every $C^{1+\alpha}$ diffeomorphism exhibiting a dominated splitting $E^{s} \oplus E^{cu}$ where $E^{cu}$ fulfills the previous assumptions admits an SRB measure.
\end{abstract}

 
\section{Introduction}

Let 
$(X,T)$ be a dynamical system where 
$X$
is a compact manifold and $T:X \to X$ is a homeomorphism. We denote by $\mathcal{M}(X)$ the space of all Borel probability measures on $X$, and by $\mathcal{M}(X,T)$ the space of all $T$-invariant Borel probability measures, respectively. The space $\mathcal{M}(X)$ is endowed with the weak$^{\ast}$ topology, and $\mathcal{M}(X,T)$ inherits the corresponding subspace topology.

A central question in dynamics is to understand how typical orbits distribute in 
$X$ and which invariant measures describe their long-term behavior. For any point $x \in X$, let $V(x)$ denote the set of accumulation points (in the weak$^{\ast}$ topology) of the sequence
\begin{equation}\label{empirical}
\gamma_{n, x}:=\frac{1}{n} \sum_{j=0}^{n-1} \delta_{T^{j}(x)},
\end{equation}
where $\delta_{y}$ is the Dirac probability measure supported at $y \in X$. The measures $\gamma_{n,x}$ are called the \emph{empirical probability measures} of the orbit of $x$.  

Given an $T$-invariant Borel probability measure $\mu$, the \emph{basin of attraction} of $\mu$ is
\[
B(\mu) = \{ x \in X : V(x) = \{\mu\} \}.
\]
We say that $\mu$ is a \emph{physical measure} for $T$ if its basin of attraction $B(\mu)$ has positive Lebesgue measure in $X$. We denote by $\mathbf{Phys}$ the set of all physical measures of the system.

\begin{defn}
An invariant Borel probability measure $\mu \in \mathcal{M}(X, T)$ is \emph{observable} (or \emph{physical-like}) if, for any $\varepsilon>0$, the set
\[
B_{\varepsilon}(\mu) = \{x \in X: \operatorname{dist}(V(x), \mu)<\varepsilon\}
\]
has positive Lebesgue measure. The set $B_{\varepsilon}(\mu)$ is called the \emph{basin of $\varepsilon$-attraction} of $\mu$. We denote by $\mathcal{O}_T$ the set of all observable measures.
\end{defn}

Observable measures were introduced by Catsigeras and Enrich \cite{CE}, who showed that the set $\mathcal{O}_T$ is non-empty and compact with respect to the weak$^{\ast}$ topology (see \cite[Theorem 1.3]{CE}). Every physical measure is observable, but an observable measure need not be physical. In general, a system may have no physical measures (see Example~\ref{not phy}); however, observable measures always exist (see \cite[Theorem 1.3]{CE}).



Let $f:M \to M$ be a $C^1$ diffeomorphism on a compact Riemannian manifold $M$. Let $\mathcal{A} := Df$ denote the derivative cocycle over $f$. For $x \in M$ and $v \in T_x M$, define the Lyapunov exponent of $v$ at $x$ by
\[
\lambda(x, v, \mathcal{A}) = \limsup_{n \to \infty} \frac{1}{n} \log \| \mathcal{A}^n(x) v \| = \limsup_{n \to \infty} \frac{1}{n} \log \| Df^n_x v \|.
\]

For each $x \in M$, the Lyapunov exponent function $\lambda(x, \cdot, \mathcal{A})$ takes finitely many values
\[
\lambda_1(x, \mathcal{A}) \le \cdots \le \lambda_{p(x)}(x, \mathcal{A}), \quad p(x) \le \dim M,
\]
and these values are invariant along orbits: 
\(\lambda_i(f(x), \mathcal{A}) = \lambda_i(x, \mathcal{A})\) and \(p(f(x)) = p(x)\).

A Borel $f$-invariant measure $\mu$ is \emph{hyperbolic} if, for $\mu$-almost every $x$, all Lyapunov exponents are nonzero and satisfy
\[
\lambda_1(x, \mathcal{A}) \le \cdots \le \lambda_k(x, \mathcal{A}) < 0 < \lambda_{k+1}(x, \mathcal{A}) \le \cdots \le \lambda_{p(x)}(x, \mathcal{A}),
\]
for some integer $k = k(x)$, where the first $k$ exponents are negative and the remaining $p(x)-k$ are positive.

If $\mu$ is ergodic, the Lyapunov exponents are constant almost everywhere, and we write
\[
\lambda_1(\mu, \mathcal{A}) \le \cdots \le \lambda_k(\mu, \mathcal{A}) < 0 < \lambda_{k+1}(\mu, \mathcal{A}) \le \cdots \le \lambda_{p(\mu)}(\mu, \mathcal{A}).
\]

\color{black}
We say that a Borel invariant measure $\mu$ is {\it SRB (Sinai–Ruelle–Bowen)} measure if $\mu$ is hyperbolic and admitting a system of conditional measures such that the conditional measures on unstable manifolds are absolutely continuous with respect to the Lebesgue measures on these manifolds induced by the restriction of the Riemannian structure (see \cite{BDV, Pe, Ya}). We denote by $\textbf{SRB}$ the set of SRB measures and 

In his celebrated ICM talk, M. Viana conjectured that a $C^r$ ($r>1$) diffeomorphism of a compact manifold admits an SRB measure whenever the set of points with positive Lyapunov exponents has positive Lebesgue measure (see \cite{CLP17} for more details).

 In the 1970s, Sinai, Ruelle, and Bowen introduced SRB measures for uniformly hyperbolic attractor sets $\Lambda$ under the assumption that $\Lambda$ has a dense set of periodic points or, equivalently, that $\Lambda$ is locally maximal or has a local product structure. These three properties of uniformly hyperbolic sets are equivalent (cf. \cite{HP}). For partially hyperbolic attractors with the uniform expansion, Pesin and Sinai \cite{PS} developed a technique called the \textit{push-forward} in 1982 to construct special measures known as \textit{$u$-measures}. These measures share many geometric characteristics with the SRB measures. An ergodic SRB measure is physical, but not every physical measure is necessarily an SRB measure. The easiest counterexamples are the point masses on attractive fixed points and periodic orbits. Another interesting example is the figure-eight attractor that has a physical measure with a positive Lyapunov exponent that is not an
SRB measure.

 Alves et al. \cite{ABV} considered the more challenging setting of partially hyperbolic attractors with a continuous splitting $E^{s} \oplus E^{cu}$ with uniform contraction $E^{s}$ and non-uniform expansion $E^{cu}$. In this case, the construction of the SRB measure required significantly more complex techniques due to the non-uniform expansion along $E^{cu}$. In fact, they showed that $f$ admits at least one and finitely many ergodic physical SRB measures \color{black} under the assumption that there exists a set $H$ of positive Lebesgue measure on which $f$ is \textit{non-uniformly expanding} along $E^{cu}$: there exists $\lambda > 0$ such that 
\begin{equation}\label{NUE1}
\limsup_{n\to \infty}\frac{1}{n}\sum_{j=1}^{n}\log \left\| Df_{|E^{cu}(f^{j}(x))}^{-1}\right\| < -\lambda
\qquad \text{for all $x\in H$}.
\end{equation}

In \cite{ADLP}, they improved \cite[Theorem A]{ABV} by showing that the limsup condition \eqref{NUE1} could be replaced by the liminf condition, and they also used Young towers to construct the SRB measure.  We recommend the survey \cite{CLP17, HP} for further references and results.

In this paper, we denote by $\Leb$ a normalized Lebesgue measure with $\Leb(X)=1$. 
Apart from the one-dimensional setting, it has been difficult to work
directly with Lyapunov exponents.  As a result, \cite{ABV, ADLP} introduced stronger versions of non-uniform expansion.  Moreover, \cite{ABV} asked whether their result is true if non-uniformly expanding condition \eqref{NUE1} replaced by
\begin{equation}\label{NUE2}
\limsup_{n\to \infty} \frac{1}{n} \log \left\|Df_{|E^{cu}(f^{n}(x))}^{-n}\right\|<0
\end{equation}
for $\Leb$ almost every $x$, which is the positivity of all Lyapunov exponents in the center unstable direction.

For systems with partial hyperbolicity and non-uniform expansion in the $E^{cu}$ direction, \cite{ABV} proved the existence of invariant probability measures with absolutely continuous conditional measures along the center-unstable direction with respect to Lebesgue measure. These measures are referred to as \textit{Gibbs cu-states} (for a precise definition and properties, see \cite{BDV}).

In this work, we deal with $C^{1}$ diffeomorphisms admitting partially hyperbolic invariant sets, where the tangent bundle over the set has an invariant dominated splitting into two subbundles, namely  non-uniform expansion $E^{cu}$ and non-uniform contraction $E^{cs}$. In this paper, we show that equation \eqref{NUE2} implies equation \eqref{NUE1} under the assumption that the cocycle $Df_{|E^{cu}(f)}^{-1}$ has a 1-dominated splitting on the support of an ergodic observable measure. Our key assumption is that the cocycle $Df_{|E^{cu}(f)}^{-1}$ has a 1-dominated splitting on the support of an ergodic observable measure (the measure always exists, cf. \eqref{existence of obs} and Definition \ref{main-def}) and not on the entire manifold or the attractor. 
Our goal is to show that for partially hyperbolic systems, the positive Lyapunov exponents in the center-unstable direction imply non-uniform expansion under a mild assumption.

\medskip

This paper is organized as follows.
In Subsection~\ref{main results}, we state the main results. Section~\ref{sec:prelim} is devoted to some preliminaries, on observable and physical measures, matrix cocycles, dominated splitting and the Proof of Corollary~\ref{corCinfty}. Finally, in Section~\ref{proofs} we prove the main results.

\subsection{Main results}\label{main results}

\medskip

\subsubsection{Lyapunov ergodic optimization}
Our first main result concerns ergodic optimization for Lyapunov exponents of diffeomorphisms exhibiting a dominated splitting as above. { Let $f: M \rightarrow M$ be a $C^{1}$ diffeomorphism on a compact Riemannian manifold that admits a dominated splitting $T M=E \oplus F$.  Given a non-empty $f$-invariant compact set $Y\subset M$, we say that $Df_{|F}^{-1}$ has a 1-dominated splitting  on $Y$ if there exists a one-dimensional invariant cone fields $(C_x)_{x\in Y}$ on $Y$ for $Df_{|F(f)}^{-1}$. 
Given a $Df$-invariant subbundle $F$, the \textit{Lyapunov exponent} of $Df^{-1}_{|F}$ with respect to an invariant measure $\mu$ and $f$ is defined by \begin{equation}\label{LE-PHS}
    \lambda(\mu, Df^{-1}_{|F}):=\lim_{n \to \infty} \frac{1}{n} \int \log \left\|\left.D f^{-n}\right|_{F\left(f^n x\right)}\right\| d\mu(x).
\end{equation}We recall that $\mathcal{O}_f$ is non-empty and  compact with respect to the weak* topology (see Subsection \ref{subse:Obs} for more details). As the function $\mathcal O_{f} \ni \mu\mapsto \lambda (\mu, Df^{-1}_{|F})$ is upper semi-continuous with respect to the weak* topology (see \cite[Proposition A.1]{FH}) and  $\mathcal O_{f}$ is a compact metric space (see \cite[Theorem 1.3]{CE}), one has
\begin{equation}\label{existence of obs0}
m(Df^{-1}_{|F}):=\sup_{\mu \in \mathcal{O}_{f}}\lambda(\mu, Df^{-1}_{|F})=\max_{\mu \in \mathcal{O}_{f}} \lambda(\mu, Df^{-1}_{|F}).    
\end{equation}

\begin{defn}\label{main-def}
    We say that an $f$-invariant observable measure
    $\mu \in \mathcal{O}_f$ is a maximizing observable measure if 
    $\lambda(\mu, Df^{-1}_{|F}) = m(Df^{-1}_{|F})$.
\end{defn}}
By \eqref{existence of obs0}, a maximizing observable measure always exists. In \eqref{LE-PHS}, the limit concerns Lyapunov exponents for invariant measures, whereas the Lyapunov exponent defined in \eqref{NUE2} is more difficult to handle due to the possible non-invariance of the Lebesgue measure. In the next theorem, we not only relate the Lyapunov exponents given by \eqref{NUE2} to those for observable measures, but also conclude, in particular, that there exists an (invariant) observable measure that realizes such Lyapunov exponents. More precisely:

\begin{maintheorem}\label{mainresult3}
Let $f: M \rightarrow M$ be a $C^{1}$ diffeomorphism on a compact Riemannian manifold that admits a dominated splitting $T M=E \oplus F$.  
 Assume $Df^{-1}_{|F}$ has a 1-dominated splitting on the support of an ergodic maximizing observable measure $\mu \in \mathcal{O}_{f}$.
Then,
\[
\ess \sup \limsup_{n\rightarrow \infty} \frac{1}{n}\log \left\|\left.D f^{-n}\right|_{F\left(f^n x\right)}\right\|=\sup_{\mu \in \mathcal{O}_{f}} \lambda(\mu,  Df^{-1}_{|F})=\max_{\mu \in \mathcal{O}_{f}} \lambda(\mu,  Df^{-1}_{|F}),\]
where the $\ess \sup$ denotes the essential supremum with respect to the Lebesgue measure $\Leb$.
\end{maintheorem}
We recall that a maximizing observable measure $\mu \in \mathcal{O}_{f}$, which is stated in the above theorem, always exists (see \ref{existence of obs0} and Definition \ref{main-def}). The result also applies to diffeomorphisms with partially hyperbolic attractors by restricting the assumptions on the attractors (see Subsection \ref{PH-dominated}).

 Let $f:M \to M$ be a $C^{\infty}$ map on a compact smooth manifold $M$. Kozlovski \cite{K} showed the following integral formula for  the topological entropy of \color{black} $C^{\infty}$ smooth systems:
\begin{equation}\label{definition of entropy}
    h_{\operatorname{top}}(f)=\lim _{n\rightarrow \infty} \frac{1}{n} \log \int \max _{k}\left\|\Lambda^{k} Df_x^{n}\right\| d \operatorname{Leb}(x),
\end{equation}

 where $\Lambda^{k} Df_x^{n}$ is a mapping between $k$-exterior algebras of the tangent spaces. The methods used in the proof of Theorem~\ref{mainresult3} can be used to obtain the following consequence (see (1.3) in \cite{Bu} for a similar result).

\begin{maincorollary}\label{corCinfty}
Assume that $f:M \to M$ is a $C^{\infty}$ map on a compact smooth manifold  $M$. Then
$$
\begin{aligned}
\ess \sup \limsup_{n\rightarrow \infty}\frac{1}{n} \log \max_{k}\|\wedge^{k} Df_x^{n}\|&\leq h_{\operatorname{top}}(f)\\
& \leq 
 \limsup_{n\rightarrow \infty}\frac{1}{n}\log \ess \sup \max_{k}\|\wedge^{k} Df_x^{n}\|,
\end{aligned}
$$
where the $\ess \sup$ denotes the essential supremum with respect to the Lebesgue measure $\Leb$.
\end{maincorollary}

\subsubsection{SRB measures for partially hyperbolic systems}
Assume that 
$f: M \rightarrow M$ is a $C^{1}$ diffeomorphism on a compact Riemannian  manifold that admits a dominated splitting $T M=E^{cs} \oplus E^{cu}$ (see Subsection \ref{PH-dominated} for the definitions).  
Given an $f$-invariant probability measure $\mu$ we denote by 
$$
\lambda^- (\mu, f):=\lambda^- (\mu, Df_{|E^{cu}(f)}^{-1}):=\lim_{n\to \infty} \frac{1}{n} \int \log \left\|D f_{\mid E^{c u}\left(f^n (x)\right)}^{-n}\right\| d\mu(x)
$$
 the minimal Lyapunov exponent of $\mu$ in the center-unstable direction. As the function $\mathcal O_{f} \ni \mu\mapsto \lambda^- (\mu, f)$ is upper semi-continuous with respect to the weak* topology (see \cite[Proposition A.1]{FH}) and  $\mathcal O_{f}$ is a compact metric space (see \cite[Theorem 1.3]{CE}), one has
\begin{equation}\label{existence of obs}
m(f):=\sup_{\mu \in \mathcal{O}_{f}} \lambda^- (\mu, f)=\max_{\mu \in \mathcal{O}_{f}} \lambda^- (\mu, f).    
\end{equation}

\begin{defn}
    We say that an $f$-invariant observable measure
    $\mu \in \mathcal{O}_f$ is a maximizing observable measure \color{black} if 
    $\lambda^- (\mu, f) = m(f)$.
\end{defn}
By \eqref{existence of obs}, a maximizing observable measure always exists.
 Given a non-empty $f$-invariant compact set $Y\subset M$, we say that $Df_{|E^{cu}(f)}^{-1}$ has a 1-dominated splitting  on $Y$ if there exists a one-dimensional invariant cone fields $(C_x)_{x\in Y}$ on $Y$ for $Df_{|E^{cu}(f)}^{-1}$. 
We can now state the main results of the paper, concerning the existence of SRB measures for partially hyperbolic diffeomorphisms,
which will follow as consequences of a more general result for matrix cocycle (cf. Theorem \ref{main-conj}). More precisely, the following theorem addresses the relationship between the Lyapunov exponent in \eqref{NUE2} and the non-uniformly expanding \eqref{NUE1}, and, as a result, establishes the existence of an SRB measure.


\begin{maintheorem}\label{mainresult1}
(Partially hyperbolic diffeomorphisms)
Let $f: M \rightarrow M$ be a $C^{1}$ diffeomorphism on a compact Riemannian  manifold that admits a dominated splitting $T M=E^{cs} \oplus E^{cu}$.  
Assume that
    $Df_{|E^{cu}(f)}^{-1}$ has a 1-dominated splitting on the support of an ergodic maximizing observable measure $\mu \in \mathcal{O}_{f}$. The following properties hold:
    \begin{enumerate}
        \item If 
 \begin{equation}\label{LE00}
   \limsup_{n\to \infty}\frac{1}{n}\log \left\| Df^{-n}_{|E^{cu}(f^{n}(x))}\right\| <0 \qquad \text{Lebesgue a.e. $x$}
\end{equation} 
then there exist $\lambda>0$ and $K \in \mathbb{N}$ such that 
\begin{equation*}\label{NUE}
\limsup _{n \rightarrow \infty} \frac{1}{n K} \sum_{i=1}^{n} \log \left\|D f_{|E^{cu}(f^{i K}(x))}^{-K}\right\| \leq-\lambda<0,
\end{equation*}
for Lebesgue almost every $x$.
        \item  If we replace $C^1$ with $C^{1+\alpha}$ for $f$ and $E^{cs}$ with $E^s$ (uniformly contracting) in the conditions of the theorem, and ~\eqref{LE00} holds, then $f$ has finitely many ergodic physical SRB measures, and the union of their basins has full Lebesgue measure.
    \end{enumerate}
 \end{maintheorem}

\medskip
 SRB measures for such partially hyperbolic diffeomorphisms have been partially studied 
 (see \cite{Burguet, Yongxia, CYZ} and references therein). We observe that
 it is clear that the two formulations 
 of Lyapunov exponents in the previous statement are equivalent when the non-uniformly expanding direction is 1-dimensional, i.e. $E^{cu}=E^{c} \oplus E^{u}$ with $\dim E^{c}=1$ and $E^{u}$ is uniformly expanding.
 Theorem~\ref{mainresult1} is an extension of the latter in the sense that one only requires the domination property on the support of an observable measure (such measures always exist, cf. \eqref{existence of obs}) and not on the attractor.

Finally, it is not hard to deduce a version of Theorem~\ref{mainresult1} for invariant sets of positive Lebesgue measure and observable measures relative to center-unstable disks, as defined in \cite{CYZ}. We leave this as an easy exercise to the reader.

\begin{maincorollary}
    \label{equal O=SRB}
Let $f: M \rightarrow M$ be a $C^{1+\alpha}$ diffeomorphism on a compact Riemannian manifold that admits a dominated splitting $TM=E^s \oplus E^{cu}$, where $E^s$ is uniformly contracting.
If
    $Df_{|E^{cu}(f)}^{-1}$ has a 1-dominated splitting on the support of an ergodic maximizing observable measure $\mu \in \mathcal{O}_{f},$
then
$$
\begin{aligned}
\ess \sup \limsup_{n\to \infty}\frac{1}{n}\log \left\| Df^{-n}_{|E^{cu}(f^{n}(x))}\right\| &=\sup_{\mu \in \mathcal{O}_{f}} \lambda^- (\mu, f)\\
&=\sup_{\mu \in \textbf{Phys}} \lambda^- (\mu, f)\\
&=\sup_{\mu \in \textbf{SRB}} \lambda^- (\mu, f),
\end{aligned}
$$
where the essential supremum is taken with respect to the Lebesgue measure.
\end{maincorollary}


\begin{rem}
The assumption of uniform contraction in the previous corollary can be somewhat relaxed, as explained in Section 6 of \cite{ABV}. 
Let $f: M \rightarrow M$ be a $C^{1+\alpha}$ diffeomorphism on a compact Riemannian  manifold that admits a dominated splitting $T M=E^{cs} \oplus E^{u}$ with $E^{cs}$ mostly contracting, meaning that for any local unstable manifold $\gamma^u$, then we have $\lambda^{cs}(x)<0$
for a positive $\Leb_{\gamma^{u}}$ measure set of points $x\in \gamma^{u},$ where the largest Lyapunov exponent in the $E^{cs}$-direction
  \[ \lambda^{cs}(x):=\limsup_{n\to \infty}\frac{1}{n}\log \|Df_{|E^{cs}(x)}^{n}(x)\|.\]
 Bonatti and Viana \cite{BV} showed that $f$ has finitely many ergodic physical SRB measures and the union of their basins has full Lebesgue measure. In consequence, Corollary \ref{equal O=SRB} can admit a formulation in such context. 
    \end{rem}

\begin{rem}
All results of the present paper also work for diffeomorphisms with partially hyperbolic attractors (see Subsection \ref{PH-dominated}).
\end{rem}

\section{Applications}\label{examples}
In the following section, we will present  several examples which enlighten the concept of observable measures and possible applications of our results.


\begin{ex}\label{main-example1}
Let $f:M \to M$ be a $C^{1}$-diffeomorphism on a smooth compact Riemannian manifold $M$ that admits a dominated splitting $T M=E^{cs} \oplus E^{cu}$. 
Let $\mu \in \mathcal{O}_f$ be an ergodic maximizing observable measure. Assume that $D f_{\mid E^{c u}\left(f\right)}^{-1}$ satisfies the uniform 1-gap property with respect to $\mu$, i.e., there are a $C^0$-neighborhood $\mathcal{U}_{D f_{\mid E^{c u}\left(f\right)}^{-1}}$ of $D f_{\mid E^{c u}\left(f\right)}^{-1}$ 
and $\beta>0$ such that for every $B \in \mathcal{U}_{D f_{\mid E^{c u}\left(f\right)}^{-1}}$ we have
$$
\int\left(\chi_1(x, B)-\chi_2(x, B)\right) d \mu(x)>\beta ,
$$
where 
\(\chi_1(x, B)\) and \(\chi_2(x, B)\) are the largest and second-largest Lyapunov exponents for the cocycle \(\B\).  Then, \cite{Ye} ensures that the cocycle $D f_{\mid E^{c u}\left(f\right)}^{-1}$ has a 1-dominated splitting on the support of the ergodic maximizing observable measure $\mu$. Therefore, by Theorem \ref{mainresult3},
\[
\ess \sup \limsup_{n\rightarrow \infty} \frac{1}{n}\log \left\|\left.D f^{-n}\right|_{E^{c u}\left(f^n x\right)}\right\|=\sup_{\mu \in \mathcal{O}_{f}} \lambda(\mu,  Df^{-1}_{|E^{c u}})=\max_{\mu \in \mathcal{O}_{f}} \lambda(\mu,  Df^{-1}_{|E^{c u}}),\]
where the $\ess \sup$ denotes the essential supremum with respect to the Lebesgue measure $\Leb$. Also, by our Theorem \ref{mainresult1}, there exist $\lambda>0$ and $K \in \mathbb{N}$ such that 
$$
\limsup _{n \rightarrow+\infty} \frac{1}{n K} \sum_{i=1}^{n} \log \left\|D f_{|E^{cu}(f^{i K}(x))}^{-K}\right\| \leq-\lambda<0.
$$
for Lebesgue almost every $x$ if $\ess \sup \limsup_{n\rightarrow \infty} \frac{1}{n}\log \left\|\left.D f^{-n}\right|_{E^{c u}\left(f^n x\right)}\right\|<0.$

Moreover,  if we replace $C^1$ with $C^{1+\alpha}$ for $f$ and $E^{cs}$ with $E^c$ in the conditions of the current example, then $f$ has finitely many ergodic physical SRB 
measures and the union of their basins has full Lebesgue measure by our Theorem \ref{mainresult1}.
\end{ex}

\begin{ex}
Let $f:\mathbb{T}^2 \to \mathbb{T}^2$ be a $C^{1+\alpha}$ Anosov diffeomorphism. We know that there exists a unique SRB measure $\mu_f$ for $f$. 
Let $g: S^1 \to S^1$ be a \emph{North--South diffeomorphism}, that is, a diffeomorphism with exactly two fixed points: $p_1$, the North Pole, which is repelling, and $p_2$, the South Pole, which is attracting.
Suppose that 
$$
\|Df_{|E^s} \| < m(Dg) \leq \|Dg \| < \| Df_{|E^u}\|,
$$ 
where $m(Dg):=\|Dg^{-1}\|^{-1}$. Then $f \times g : \mathbb{T}^3 \to \mathbb{T}^3$ is a partially hyperbolic diffeomorphism. 
The measures $\nu_1:=\mu_f \times \delta_{p_2}$ and $\nu_2:=\mu_f \times \delta_{p_1}$ are observable measures (see Theorem \ref{total}). Moreover, the measure $\nu_1$ is a $u$-Gibbs and physical measure, while the measure $\nu_2$ is a $u$-Gibbs measure, but not physical. 
It is simple to check that $Df_{|E^{cu}(f)}^{-1}$ has a 1-dominated splitting on the support of $\nu_1$.
\end{ex}
\color{black}

The following example relates to observable measures.
\begin{ex}[{Bowen's example}]
We consider the time-1 map of the flow as shown in Figure \ref{fig1}. 
\begin{figure}[H]
    \centering
    \includegraphics[width=0.6\linewidth]{Bowen.png}
    \caption{Bowen's example.}
    \label{fig1}
\end{figure}
The Dirac measure at the hyperbolic fixed point \( p \) is a hyperbolic physical measure whose basin of attraction includes all points except \( q_1 \) and \( q_2 \). By slowing down the flow near \( p \), one can adapt this example so that \( p \) becomes an indifferent fixed point, and hence, the physical measure is no longer hyperbolic, but it is an observable measure.
\end{ex}

\begin{ex}\label{not phy}
Let $g:\mathbb{T}^{2} \to \mathbb{T}^2$ be a transitive $C^2$ Anosov diffeomorphism. By Sinai's theorem,  there exists a $g$-ergodic physical measure $\mu$ on the two-torus, which is an SRB measure for $g$. Denote by $f: \mathbb{T}^3 \to \mathbb{T}^3$ the $C^2$ map on the three-dimensional torus $\mathbb{T}^3$, defined by $f(x, y, z) = (x, g(y, z)).$ For Lebesgue almost all initial states $(x, y, z)\in \mathbb{T}^3$,
the sequence \eqref{empirical} of the empirical probability measures converges to a measure
$\mu_x = \delta_x \times \mu$, which is supported on a 1-dimensional unstable manifold injectively immersed in the two-torus $\{x\} \times \mathbb{T}^2$. Measures $\mu_x$ are mutually singular for different values of $x\in S^1$ as they are supported on disjoint compact two-tori embedded on $\mathbb{T}^2$. The basin of attraction $B(\mu_x)$ has zero Lebesgue measure in the ambient manifold, therefore, none of the probability measures $\mu_x$ is physical for $f$. Moreover, the set of all those measures $\mu_x$, which is weak* compact, coincides with the set $\mathcal{O}_T$ of observable measures for $f$ by Theorem \ref{total}.

\end{ex}



\section{Preliminaries}\label{sec:prelim}
\subsection{Observable measures}\label{subse:Obs}

In this section, we recall the notions and facts related with observable measures.
Let $T:X \to X$ be a homeomorphism on a compact manifold $X$ and let $\Leb$ be a Lebesgue measure normalized so that $\Leb(X)= 1.$

A $T$-invariant Borel probability measure $\mu \in \mathcal{M}(X, T)$ is  observable or physical-like if for any $\varepsilon>0$ the set $B_{\epsilon}(\mu)=\{x \in X: \operatorname{dist}(V(x), \mu)<\varepsilon\}$
has positive Lebesgue measure. We call $B_{\epsilon}(\mu)$ the  basin of $\varepsilon$-attraction of $\mu$.  We denote by $\mathcal{O}_T$ the set of all observable measures.

\begin{rem}\label{approaching} By the definition of observable measures, $\epsilon$-approximation in the space $\mathcal{M}(X, T)$ can be transferred to an $\epsilon$-approximation (in time-mean) towards an
attractor in the ambient manifold $X$. In other words, if $\mu$ is observable and $x\in B_{\epsilon}(\mu)$, then the iterates $T^{n}(x)$ will $\epsilon$-approach the support of $\mu$ with a frequency which is
asymptotically bounded away from zero.
\end{rem}

We recall that
$
B(\mu) = \{ x \in X : V(x) = \{\mu\} \}
$
is the \emph{basin of attraction} of the physical measure $\mu$. \color{black} Inspired by the latter definition, we define {\it the basin of attraction} $\mathcal{O}_T$:
\begin{equation}\label{basin-of-attraction}
\mathcal{G}:= \mathcal{G}(\mathcal{O}_T)=\{x \in X : V(x) \subset \mathcal{O}_T \}.
\end{equation}
Catsigeras and Enrich \cite[Theorems 1.3]{CE} have shown that $\mathcal{O}_T$ is non-empty. Let us collect some other relevant properties from \cite[Theorems 1.5--1.7]{CE}:

\begin{thm}[\cite{CE}]\label{total}
The following properties hold:
\begin{enumerate}
    \item $\mathcal{O}_T$ is the smallest weak* compact set that contains, for Lebesgue almost all initial states, the limits of all convergent subsequences of (\ref{empirical});
    \item There are finitely many physical measures such that the union of their basins of attraction covers $X$  Lebesgue almost everywhere if and only if $\mathcal{O}_T$ is finite and, if this is the case, 
    $\mathcal{O}_T=$\textbf{Phys};
    \item If $\mathcal{O}_T$ is countably infinite, then there are countably infinitely many physical measures such that their basins of attraction cover $X$ Lebesgue almost everywhere and, if this is the case, $\mathcal{O}_T$ is the weak* closure of the set of physical measures.
\end{enumerate}
\end{thm}

Note that item (1) above means that $\mathcal{O}_T=\bigcap_{K \in W} K$, where
\[W:=\{K \subset \mathcal{M}(X): K\text{ is compact and } \Leb(\mathcal{G}(K))=1\}.\]
Moreover, 
it is worth noticing that, by Theorem \ref{total}, $\Leb(\mathcal{G})=1$.


\subsection{Matrix cocycles}\label{matrix cocycles}

A \textit{matrix cocycle} $\A$ over  a topological dynamical system $(X, T)$ is a continuous map $\A \colon X \to \glr$. For $n\in \N$ and $x \in X$, we define the product of $\A$ along the length $n$ orbit of $X$ as   
\begin{equation}\label{product}
\mathcal{A}^{n}(x)=\mathcal{A}(T^{n-1}(x))\mathcal{A}(T^{n-2}(x))\cdots \mathcal{A}(x).
\end{equation}
 When the context is clear, we say that $\A$ is a cocycle. Also, it induces a skew-product 
 $$
 F:X \times \R^{d} \rightarrow X\times \R^{d}\qquad \text{given by} \qquad F(x, v)=(T(x), \mathcal{A}(x)v).
 $$
 Observe that $F^{n}(x, v)=(T^{n}(x), \mathcal{A}^{n}(x)v)$ for each $n \geq 1$.
In case 
$T$ is invertible then so is $F$ and $F^{-n}(x)=(T^{-n}(x), \mathcal{A}^{-n}(x)v)$ for each $n\geq1$, where
\[\mathcal{A}^{-n}(x):=\mathcal{A}(T^{-n}(x))^{-1}\mathcal{A}(T^{-n+1}(x))^{-1} \dots \, \mathcal{A}(T^{-1}(x))^{-1}.\]
Using the submultiplicativity of the norms of matrices, Kingman's sub-additive ergodic theorem ensures that
if $\mu \in \mathcal{M}(X, T)$ and
$\log ^{+} \|\mathcal A^{\pm1}\| \in L^{1}(\mu)$ then 
the limit 
$$
\chi(x, \mathcal{A}):=\lim _{n \rightarrow \infty} \frac{1}{n} \log \|\mathcal A^n(x)\|,
$$
is well defined 
$\mu$-almost everywhere, and it is called the \textit{top Lyapunov exponent} at $x$.

\begin{rem}
An important class of matrix cocycles are \textit{derivative cocycles}, which are defined as follows. Let $f: M \rightarrow M$ be a diffeomorphism on a manifold $M$. Assume that the tangent bundle $T M$ is trivial. We then consider $\A(x)=D f_x$ at each point as a matrix. Thus the tangent map
$$
D f: T M \rightarrow T M, \quad D f(x, v)=(f(x), \A(x) v)
$$
is a skew product that generated by a matrix cocycle $(f, \A)$, in the previous sense. 
\end{rem}

\begin{thm}[{{Oseledets}}]Let $\A: X\to \glr$ be a matrix cocycle over  a homeomorphism $(X, T)$ and $Gr(d)$ denote the Grassmanian of $\R^d.$ Let $\mu$ be an $T$-invariant measure. For $\mu$ almost every $x\in X$, there are functions $k=k(x)$, $\lambda_1(x, \mathcal{A}) > \lambda_2(x, \mathcal{A}) > \ldots > \lambda_k(x, \mathcal{A})$ and a direct sum decomposition $\R^d = E^1(x) \oplus E^2(x) \oplus \ldots E^k(x)$ such that, for every $i=1, \ldots, k:$ 
\begin{itemize}
\item[1)] The maps $x \to k(x)$, $x \to \lambda(x, \mathcal{A})$ and $x \to E^i(x)$ (with values $\N$, $\R$ and $Gr(d)$, respectively) are measurable.
\item[2)] $k(T(x)) = k(x)$, $\lambda_{i}(T(x), \mathcal{A}) = \lambda_{i}(x, \mathcal{A})$ and $\mathcal{A}(x)E^i(x) = E^i(T(x)).$

\item[3)] For every non-null $v\in E^i(x)$,
\[\lim_{n \to \pm \infty}\frac{1}{n}\log \|\mathcal{A}^{n}(x)v\|=\lambda_{i}(x, \mathcal{A}).\]
\item[4)] For any disjoint sum of the spaces within the splitting
\[\lim_{n\to \pm \infty}\frac{1}{n} \log |\sin \measuredangle (\oplus_{i \in I}E^i(T^n(x)),\oplus_{i \in J}E^i(T^n(x))|= 0\] whenever $I \cap J= \emptyset$.
\end{itemize}
\end{thm}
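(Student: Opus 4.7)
The plan is to follow the classical scheme going back to Oseledets and streamlined by Raghunathan: first extract the Lyapunov exponents from Kingman's subadditive ergodic theorem, then build a filtration via the singular value decomposition of $\mathcal{A}^n(x)$, and finally intersect the forward and backward filtrations to obtain the splitting.

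The first step is to apply Kingman's subadditive ergodic theorem to the subadditive cocycle $\Phi_p := \{\log\|\wedge^p \mathcal{A}^n\|\}_{n\ge 1}$ for each $p=1,\dots,d$, where $\wedge^p \mathcal{A}$ denotes the induced action on the $p$-th exterior power of $\R^d$. The submultiplicativity of the operator norm transfers to exterior powers, so each $\Phi_p$ is subadditive, and the limits
\[
\Lambda_p(x)=\lim_{n\to\infty}\frac{1}{n}\log\|\wedge^p \mathcal{A}^n(x)\|
\]
exist $\mu$-a.e. and are $T$-invariant. Setting $\Lambda_0\equiv 0$, the candidates for distinct Lyapunov exponents $\lambda_1(x,\mathcal{A})>\cdots>\lambda_{k(x)}(x,\mathcal{A})$ are obtained by arranging the differences $\Lambda_p(x)-\Lambda_{p-1}(x)$ in decreasing order and grouping equal values, the multiplicity of $\lambda_i(x,\mathcal{A})$ being the number of $p$'s producing that value. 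Invariance of each $\Lambda_p$ under $T$ gives (2) for the exponents, and measurability in (1) follows since each $\Lambda_p$ is a $\mu$-a.e.\ limit of continuous functions.

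The second and more delicate step is to construct the filtration via the polar decomposition of $\mathcal{A}^n(x)$. Consider the positive symmetric matrices
\[
M_n(x)=\bigl(\mathcal{A}^n(x)^{\ast}\mathcal{A}^n(x)\bigr)^{1/(2n)}.
\]
Using Kingman's theorem on exterior powers together with a careful Borel--Cantelli / non-degeneracy argument comparing $\|\wedge^p\mathcal{A}^n\|$ with the product of the top $p$ singular values of $\mathcal{A}^n$, one shows $M_n(x)$ converges $\mu$-a.e.\ to a positive definite matrix $\Lambda(x)$ whose $\log$-eigenvalues, with multiplicity, recover $\lambda_1(x,\mathcal{A}),\dots,\lambda_{k(x)}(x,\mathcal{A})$. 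The eigenspaces of $\Lambda(x)$ assembled from the bottom produce the forward Oseledets filtration $V_1(x)\supset V_2(x)\supset\cdots\supset V_{k(x)}(x)\supset\{0\}$, and a vector $v\in V_i(x)\setminus V_{i+1}(x)$ satisfies the forward half of (3) with exponent $\lambda_i(x,\mathcal{A})$. Measurability and equivariance in (1)--(2) for $V_i$ are inherited from those of $M_n$.

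To promote the filtration to a direct sum decomposition $\R^d=E^1(x)\oplus\cdots\oplus E^{k(x)}(x)$ and obtain the two-sided limit in (3), I would apply the same construction to the inverse cocycle $(\mathcal{A}^{-1},T^{-1})$, which is legitimate because $\mathcal{A}$ takes values in $GL(d,\R)$ and $T$ is a homeomorphism preserving $\mu$. This yields a backward filtration $\hat V_1(x)\subset\hat V_2(x)\subset\cdots\subset\hat V_{k(x)}(x)=\R^d$ with $\dim\hat V_i(x)=\dim V_i(x)$, and the subspaces
\[
E^i(x):=V_i(x)\cap \hat V_i(x)
\]
are complementary: their direct sum is $\R^d$ precisely because the forward and backward Lyapunov exponents coincide (up to sign) by ergodicity of the exterior power cocycles. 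Equivariance $\mathcal{A}(x)E^i(x)=E^i(T(x))$ in (2) is immediate from equivariance of both filtrations, and (3) holds on each $E^i(x)$ by combining the forward and backward estimates.

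The remaining property (4), subexponential decay of angles between complementary sums $\bigoplus_{i\in I}E^i$ and $\bigoplus_{j\in J}E^j$ along the orbit, is the most technical point and is the main obstacle. It follows by comparing, on the one hand, the exponential rate of growth of $\|\wedge^{|I|+|J|}\mathcal{A}^n(x)(v_I\wedge v_J)\|$ for unit vectors $v_I,v_J$ in the two sums (which is $\sum_{i\in I}\dim E^i\cdot\lambda_i+\sum_{j\in J}\dim E^j\cdot\lambda_j$ by (3) applied coordinatewise), with, on the other hand, the product of the individual growth rates of $\|\wedge^{|I|}\mathcal{A}^n(x)v_I\|\cdot\|\wedge^{|J|}\mathcal{A}^n(x)v_J\|$; since the former factors as $\sin\measuredangle$ times the latter, equality of exponential rates forces the sine to decay subexponentially. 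The genuinely hard part of the whole argument is the $\mu$-a.e.\ convergence of $M_n(x)$; once that is in hand, everything else is bookkeeping with Kingman's theorem and linear algebra of exterior powers.
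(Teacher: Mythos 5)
First, a point of calibration: the paper does not prove this statement at all. It is quoted as the classical Oseledets multiplicative ergodic theorem and used as a black box, so there is no proof in the paper to compare your argument against. Your outline follows the standard Kingman--Raghunathan route (Kingman on exterior powers, singular values, forward and backward filtrations intersected to give the splitting), which is indeed how the theorem is usually proved, so the strategy is sound.

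As a proof, however, the text has genuine gaps rather than merely omitted routine details. (i) The a.e.\ convergence of $M_n(x)=\bigl(\mathcal{A}^n(x)^{\ast}\mathcal{A}^n(x)\bigr)^{1/(2n)}$ is the entire analytic content of the one-sided theorem; you correctly flag it as the hard part, but you give no argument, so the core of the theorem is assumed rather than proved. (ii) The passage from the two filtrations to the splitting is not correct as written: with your conventions $V_1(x)$ is the largest forward subspace while $\hat V_{k(x)}(x)=\R^d$ is the largest backward one, so the claim $\dim \hat V_i(x)=\dim V_i(x)$ fails already for $i=1$; the correct relation is $\dim V_i(x)+\dim \hat V_i(x)=d+\dim E^i(x)$. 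More importantly, the fact that $V_i(x)$ and $\hat V_i(x)$ are in general position (equivalently, that the $E^i(x)$ span $\R^d$) does not follow merely ``because the forward and backward exponents coincide up to sign''; it requires a transversality argument (via Lyapunov norms, tempered sequences, or the natural extension) that is of the same nature as property (4), so your sketch risks circularity between constructing the splitting and proving the angle estimate. (iii) You invoke ergodicity several times, but the statement assumes only that $\mu$ is $T$-invariant, so you must either reduce to the ergodic case by ergodic decomposition or rephrase. Since the paper treats the theorem as known, the practical fix is to cite a standard reference; if you want a self-contained proof, the convergence of $M_n$ and the transversality of the two filtrations are the two places where real work remains.
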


The splitting $\R^d = E^1(x)\oplus \ldots E^k(x)$ is called the {\it Oseledets splitting} at $x$. 
We denote by $\chi_1(x, \mathcal{A}) \geq \chi_2(x, \mathcal{A}) > \ldots \geq \chi_{d}(x, \mathcal{A})$ the \textit{Lyapunov exponents} with respect to $\mu$, counted with multiplicity (the multiplicity of $\lambda_{i}(x, \mathcal{A})$ is the dimension of $E^i(x)$). This is the \textit{Lyapunov spectrum} of the cocycle with respect to some invariant measure $\mu$.
Let  $\chi(\mu, \A) := \int \chi_1(. , \mathcal{A} ) d\mu$. If the measure $\mu$ is ergodic, then $\chi_1(x, \mathcal{A}) = \chi(\mu, \A)$ for $\mu$-almost every $x\in X$.


 \subsection{Dominated splitting}\label{PH-dominated}

Let $f$ belong to the space $\operatorname{Diff}^r(M)$ of $C^r$-diffeomorphisms endowed with the $C^r$-topology, where $r \geq 1$ and $M$ is a closed connected $d$-dimensional Riemannian manifold. For $K \subset M$, a \textit{splitting} of $T_K M = E^1 \oplus E^2 \oplus \ldots \oplus E^{\ell}$ is a linear decomposition $T_x M = E^1(x) \oplus E^2(x) \oplus \ldots \oplus E^{\ell}(x)$ at each $x \in K$ such that $\operatorname{dim} E^i(x)$ does not depend on $x$ for any $1 \leq i \leq \ell$.

\begin{defn}
Let $K \subset M$ be an $f$-invariant set (i.e., $f(K) = K$). We say that a splitting $T_K M = E^1 \oplus E^2 \oplus \ldots \oplus E^{\ell}$ is a \textit{dominated splitting} for the cocycle $Df$ if and only if the following conditions hold:
\begin{itemize}
\item ($Df$-invariance) for every $x \in K$ and $1 \leq i \leq \ell$, we have $Df_x(E^i(x)) = E^i(f(x))$.
\item (domination) there exist constants $c > 0$ and $\lambda \in (0,1)$ such that for every $1 \leq i \leq \ell-1$, every $x \in K$, and every nonzero vectors $u \in E^i(x)$ and $v \in E^{i+1}(x)$, the following inequality holds:
$$
\frac{\|Df_x^n u\|}{\|u\|} \leq c \lambda^n \frac{\|Df_x^n v\|}{\|v\|} \quad \text{for all } n \geq 0.
$$
\end{itemize}    
\end{defn}

\begin{defn}
     For a compact invariant set $K$, a $D f$-invariant bundle $F$ is:
     \begin{enumerate}
         \item \textit{uniformly contracted} (by $D f)$ if there are constants $C>0$ and $\lambda \in(0,1)$ such that for any point $x \in K$ and any $n\geq 0$, we have $\left\|\left.D f^n\right|_{F(x)}\right\| \leq C \lambda^n$;
        \item \textit{uniformly expanded} (by $D f$) if it is uniformly contracted by $D f^{-1}$.
     \end{enumerate}
      We say a compact invariant set $K$ is \textit{partially hyperbolic} if there is a $D f$-invariant dominated splitting $T_{K} M=E^u \oplus E_1^c \oplus E_2^c \oplus E^s$ such that
 \begin{itemize}
\item $E^u$ is uniformly expanded and $E^s$ is uniformly contracted.
\item  $E^u$ or $E^s$ may be trivial, but not simultaneously.
\end{itemize}
The bundles $E_1^c, E_2^c$ 
are called \textit{center bundles}. We write $E^c:=E_1^c\oplus E_2^c,$ and refer to $E^{cs}:=E^s\oplus E^c \quad\text{and}\quad E^{cu}:=E^u\oplus E^c $ as the \emph{center-stable} and \emph{center-unstable} subbundles, respectively.
\end{defn}

Bochi and Gourmelon \cite{BG} showed that a cocycle has \textit{dominated splitting} if and only if there is a uniform exponential gap between singular values of its iterates. Indeed, assuming that $X$ is a compact metric space, and letting $\mathcal{A}: X \to \glr$ be a matrix cocycle over a homeomorphism $(X,T)$. Let $Y\subset X$ be a non-empty $T$-invariant compact set. We say that the cocycle $\mathcal{A}$ has \textit{i-dominated splitting} on $Y$ if there are constants $C>0$ and $0<\tau<1$ such that
 \[ \frac{\sigma_{i+1}(\mathcal{A}^{n}(x))}{\sigma_{i}(\mathcal{A}^{n}(x))}\leq C \tau^n \hspace{0.2cm}\forall x\in Y, \forall n\in \N,\]
 where $\sigma_{i}$'s are singular values.

 Dominated splitting can be characterized in terms of the existence
of invariant cone fields (see \cite{CP}). We say that a matrix cocycle $\A$ has 1-dominated splitting on $Y$, which $Y\subset X$ is a non-empty $T$-invariant compact set, if there exist one-dimensional invariant cones $\mathcal{C}:=(C_x)_{x\in Y}$ such that
\[\mathcal{A}(x)C_{x} \subset C_{T(x)}^{o} \hspace{0.2cm}\forall x\in Y.\]
 If this is the case, $\mathcal{C}$ is called {\it unstable invariant cone field on $Y$}.

In this paper, we are interested in a $C^{1}$ diffeomorphism $f: M \rightarrow M$ on a compact Riemannian manifold that admits a dominated splitting $TM = E^{cs} \oplus E^{cu}$.
In \cite{Moh}, the author showed that if a cocycle has 1-dominated splitting  on some compact metric space, which can be characterized in terms of the existence of invariant cone fields (or multicones), then
 $ \|\mathcal{A}^{n}(x)\|$ is super-multiplicative for each $n$ and $x$.
\begin{prop}[{{\cite[Proposition 5.8]{Moh}}}] \label{prop:add}
Let $X$ be a compact metric space, and let $\mathcal{A}: X\rightarrow \glr$ be a matrix cocycle over a homeomorphism $(X, T)$. Assume that $(C_x)_{x\in X}$ is a 1-dimensional invariant cone field on $X$. Then, there exists $\kappa>0$ such that for every $m,n>0$ and for every $x\in X$ we have

\[
||\mathcal{A}^{m+n}(x)|| \geq \kappa ||\mathcal{A}^m(x)|| \cdot ||\mathcal{A}^n(T^m(x))||.
\]
\end{prop}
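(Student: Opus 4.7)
The plan is to reduce the supermultiplicative bound to a single uniform estimate capturing a ``cone-alignment'' principle: any unit vector lying in the $1$-dimensional invariant cone must, after any number of iterations, carry a uniformly positive fraction of the operator norm. Given such an estimate, the conclusion follows by concatenating iterates along a continuous section of the cone field.

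First, using continuity of the cone field on the compact space $X$, fix a continuous unit vector field $x \mapsto e_x$ with $e_x \in C_x$. For each $n \geq 1$, strict invariance gives a factorisation
\[
\mathcal{A}^n(x)\, e_x \;=\; \Lambda_n(x)\, f_n(x), \qquad \Lambda_n(x) := \|\mathcal{A}^n(x)\, e_x\| > 0,\quad f_n(x) \in C_{T^n(x)}^{o} \text{ unit},
\]
and the chain rule yields the exact identity
\[
\Lambda_{m+n}(x) \;=\; \Lambda_m(x)\,\|\mathcal{A}^n(T^m(x))\, f_m(x)\|.
\]
The key claim is that there is $c>0$, depending only on $\mathcal{A}$ and the cone field, such that $\|\mathcal{A}^n(x)\, v\| \geq c\,\|\mathcal{A}^n(x)\|$ for every $x \in X$, every $n \geq 1$, and every unit vector $v \in C_x$. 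Granting this bound and applying it with $v = e_x$ and with $v = f_m(x) \in C_{T^m(x)}^{o}$, one obtains $\Lambda_m(x) \geq c\,\|\mathcal{A}^m(x)\|$ and $\|\mathcal{A}^n(T^m(x))\, f_m(x)\| \geq c\,\|\mathcal{A}^n(T^m(x))\|$; substituting into the identity and using $\|\mathcal{A}^{m+n}(x)\| \geq \Lambda_{m+n}(x)$ yields the proposition with $\kappa = c^2$.

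The main obstacle is the key claim itself: upgrading the qualitative strict invariance $\mathcal{A}(x)\, C_x \subset C_{T(x)}^{o}$ to a quantitative uniform lower bound. The cleanest route is via the classical cone--domination correspondence: strict invariance of a continuous $1$-dimensional cone field on the compact set $X$ produces a continuous dominated splitting $\R^d = E^{uu}(x) \oplus E^{cs}(x)$ of index $1$, with $E^{uu}(x) = \bigcap_{n \geq 0} \mathcal{A}^n(T^{-n}(x))(C_{T^{-n}(x)}) \subset C_x$ and a uniform exponential gap between the two bundles. A brief argument shows $C_x \cap E^{cs}(x) = \{0\}$: a nonzero vector in the intersection would iterate to vectors remaining in $E^{cs}(T^n(x))$ whose directions, by cone contraction, must converge to $E^{uu}(T^n(x))$, contradicting the uniform transversality of $E^{uu}$ and $E^{cs}$. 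Compactness of $X$ and continuity of $E^{cs}$ then upgrade this to a uniform angle bound $\angle(v, E^{cs}(x)) \geq \beta > 0$ for all unit $v \in C_x$. Decomposing such a $v$ as $v = \alpha\, e^{uu}(x) + w$ with $w \in E^{cs}(x)$, one has $|\alpha| \geq \sin\beta$; combining the domination inequality controlling $\|\mathcal{A}^n(x)\, w\|$ against $\|\mathcal{A}^n(x)\, e^{uu}(x)\|$ with the uniform transversality of the two bundles at $T^n(x)$ gives $\|\mathcal{A}^n(x)\, v\| \asymp |\alpha|\,\|\mathcal{A}^n(x)\, e^{uu}(x)\| \asymp \|\mathcal{A}^n(x)\|$ with uniform multiplicative constants, which is the claim. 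A self-contained alternative is to invoke Birkhoff's theorem on Hilbert-metric contraction of linear maps strictly preserving a cone, which directly supplies $c$ from the strict-invariance margin.
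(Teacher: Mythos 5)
The paper does not actually prove this proposition: it is imported verbatim from \cite[Proposition~5.8]{Moh}, so there is no internal proof to compare against. Your argument is correct and is the standard route for strictly invariant one-dimensional cone fields: everything reduces to the uniform alignment estimate $\|\mathcal{A}^n(x)v\|\geq c\,\|\mathcal{A}^n(x)\|$ for unit $v\in C_x$, which you extract from the dominated splitting $E^{uu}\oplus E^{cs}$ generated by the cone field (or, alternatively, from Birkhoff contraction of the projective metric), and the supermultiplicativity with $\kappa=c^2$ then follows by concatenating along the cone. Two minor remarks. First, the continuous section $e_x$ is not needed: you may simply take $e_x$ spanning $E^{uu}(x)\subset C_x$, since only the uniform constant $c$ enters the final estimate, and this sidesteps the (inessential) question of whether a continuous unit section of the cone field exists globally. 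Second, the uniformity of $c$ over $x\in X$ does require the cone field to be continuous, closed, and uniformly proper --- hypotheses that the paper's bare definition of ``invariant cone field'' leaves implicit but that any proof, including yours, must invoke at the compactness step where the angle bound $\angle(v,E^{cs}(x))\geq\beta>0$ is made uniform.
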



\begin{thm}\label{con1}
Assume that $(X, d)$ is a compact metric space. Let $\A:X \to \glr$ be a matrix cocycle over a homeomorphism $(X, T)$ and $\{\nu_{n}\}_{n=1}^{\infty}$ be a sequence in $\mathcal{M}(X)$. We form the new sequence $\{\mu_{n}\}_{n=1}^{\infty}$ by $\mu_{n}=\frac{1}{n}\sum_{i=0}^{n-1}T_{\ast}^{i}\nu_{n}$.  Assume that $\mu_{n_{i}}$ converges to $\mu$ in $\mathcal{M}(X)$ for some subsequence $\{n_i\}$ of natural numbers.  Then $\mu$ is an $T$-invariant measure and 
\[
\limsup_{i \rightarrow \infty} \frac{1}{n_{i}}\int \log  \|\A^{n_{i}}(x)\|d\nu_{n_i}(x)\leq \chi (\mu, \A).
\]
Moreover, if $\A$ has 1-dominated splitting on $X$, then

\[
\lim_{i \rightarrow \infty} \frac{1}{n_{i}}\int \log\|\A^{n_{i}}(x)\|d\nu_{n_i}(x)= \chi (\mu, \A).
\]
\end{thm}

\begin{proof}
The first part follows from \cite[Lemma A.2]{FH} and the second part follows from the combination of \cite[Lemma A.4]{FH} and Proposition \ref{prop:add}.
\end{proof}

We also have continuity of Lyapunov exponents when an invariant cone-field exists.

\begin{lem}\label{AAP}
Let $X$ be a compact metric space, and let $\mathcal{A}: X\rightarrow \glr$ be a matrix cocycle over a homeomorphism $(X, T)$. Assume that $\A$ has 1-dominated splitting on $X$. Then, for every $\epsilon >0$, there is $\delta>0$ such that if $\mu, \nu \in \mathcal{M}(X, T)$, and $d(\mu, \nu)<\delta$, then  $|\chi(\mu, \A)-\chi(\nu, \A)|<\epsilon.$
\end{lem}
\begin{proof}
It follows from the combination of Proposition \ref{prop:add} and \cite[Lemma  A.4]{FH}.
\end{proof}

\subsection{Ergodic optimization}
Assume that $(X, d)$ is a compact metric space. Let $\A:X \to \glr$ be a matrix cocycle over a homeomorphism $(X,T).$ {\it Ergodic optimization of Lyapunov exponents} concerns the supremum of the Lyapunov exponents of measures over invariant measures:
\begin{equation}\label{max2}
\beta(\A):= \sup_{\mu \in \mathcal{M}(X,T)}\chi(\mu, \A).
\end{equation}
In \eqref{max2}, the supremum is always attained by an ergodic measure; such measures will be called {\it Lyapunov maximizing measures}. This follows from the fact that $\mathcal{M}(X, T)$ is a compact convex set  whose extreme points are exactly the ergodic measure, and $\chi(\cdot,\A)$ is upper semi-continuous with respect to the weak* topology (see \cite{jen19, Bochi-ICM, Moh-25}).

Let
us stress that the maximal Lyapunov exponent can also be characterized as follows:
\begin{equation}\label{maximal LE}
\beta(\A)=\limsup_{n\to \infty}\frac{1}{n}\sup_{x\in X}\log \|\A^{n}(x)\|=\sup_{x\in X}\limsup_{n\to \infty}\frac{1}{n}\log \|\A^{n}(x)\|.
\end{equation}

We finish this section with the following instrumental result.
\begin{lem}\label{one_side}
Assume that $X$ is a compact manifold. Let $\A:X \to \glr$ be a matrix cocycle over a homeomorphism $(X,T)$. Then
$$
\begin{aligned}
\ess \sup \limsup_{n\rightarrow \infty} \frac{1}{n}\log \|\A^n(x)\|& \leq \sup_{\mu \in \mathcal{O}_T} \chi(\mu, \A)\\& \leq \limsup_{n\rightarrow \infty}\frac{1}{n}\ess \sup \log \|\A^{n}(x)\|,
\end{aligned}
$$
where the essential supremum is taken with respect to $\Leb.$ 
\end{lem}
\begin{proof}

Choose a point $x\in \mathcal{G}$ (see \eqref{basin-of-attraction}), and then a subsequence of integers $\{n_{i}\}$ such that $\limsup_{n\rightarrow \infty}\frac{1}{n}\log \|\A^{n}(x)\|=\lim_{i\rightarrow \infty}\frac{1}{n_{i}}\log \|\A^{n_i}(x)\|.$ Since $x \in \mathcal{G}$, there is a subsequence $\gamma_{n_{i_{k}}, x}$ of $\gamma_{n_{i}, x}$ (see \eqref{empirical}) such that converges in weak$^{\ast}$ topology to an observable measure $\mu$ when $k\rightarrow \infty$ by definition. Without loss of generality, we can suppose that $\gamma_{n_{i}, x}$ converges to $\mu$ as $i  \to \infty$. Then, by Theorem \ref{con1},
\begin{equation}\label{com1}
 \lim_{i\rightarrow \infty}\frac{1}{n_{i}} \int \log \|\A^{n_i}(x)\|d\delta_{x}\leq  \chi(\mu, \A)\leq \sup_{\mu \in \mathcal{O}_T}\chi(\mu, \A).
\end{equation}
On the other hand,
\begin{equation}\label{com2}
 \lim_{i\rightarrow \infty}\frac{1}{n_{i}} \int \log \|\A^{n_i}(x)\| d\delta_{x}=\limsup_{n\rightarrow \infty}\frac{1}{n} \log \|\A^{n}(x)\|.
\end{equation}

Combination (\ref{com1}) and (\ref{com2}) and the arbitrariness of $x \in \mathcal{G}$ imply 
\[ \ess \sup \limsup_{n\rightarrow \infty} \frac{1}{n}\log \|\A^{n}(x)\| \leq \sup_{\mu \in \mathcal{O}_T} \chi(\mu, \A).\]

We consider any subset $U$ with full Lebesgue measure. We know that each subset of full Lebesgue measure is dense, hence
 \[ \limsup_{n\rightarrow \infty}\frac{1}{n}\sup_{x \in U} \log \|\A^{n}(x)\|=\limsup_{n\rightarrow \infty}\frac{1}{n} \sup_{x\in X} \log \|\A^{n}(x)\|=\beta(\A).\]
 
By \eqref{maximal LE} and \eqref{max2},
 \[ \ess \sup \limsup_{n\rightarrow \infty}\frac{1}{n} \log \|\A^{n}(x)\| \leq \sup_{\mu \in \mathcal{O}_T} \chi(\mu, \A) \leq \limsup_{n\rightarrow \infty}\frac{1}{n}\ess \sup \log \|\A^{n}(x)\|.\]
\end{proof}

\begin{proof}[Proof of Corollary~\ref{corCinfty}]

We define $
\phi_n(x) := \max_{k} \|\wedge^k Df_x^n\|$. By Kozlovski's integral formula \eqref{definition of entropy} , the topological entropy of $f$ satisfies
\[
h_{\mathrm{top}}(f) = \lim_{n \to \infty} \frac{1}{n} \log \int \phi_n(x)\, d\operatorname{Leb}(x).
\]

\medskip
For each $n$,
\[
\int \phi_n(x)\, d\operatorname{Leb}(x)\le \ess \sup \phi_n(x).
\]
Taking $\frac{1}{n} \log$ and $\limsup_{n \to \infty}$ yields
\begin{equation}\label{upper bound-entropy}
h_{\mathrm{top}}(f) = \lim_{n \to \infty} \frac{1}{n} \log \int \phi_n(x)\, d\operatorname{Leb}(x)
\le \limsup_{n \to \infty} \frac{1}{n} \log \ess \sup \phi_n(x).
\end{equation}

\medskip
We denote by
\[
\lambda(x) := \limsup_{n \to \infty} \frac{1}{n} \log \phi_n(x),
\qquad
A := \operatorname{ess\,sup} \lambda(x).
\]

For any $\varepsilon > 0$, we define
\[
E_\varepsilon := \{ x \in M : \lambda(x) > A - \varepsilon \}.
\]
Then, by the definition of the essential supremum, we have $\operatorname{Leb}(E_\varepsilon) > 0.$ 
For each $n \in \mathbb{N}$, let
\[
A_n := \left\{ x \in M : \frac{1}{n} \log \phi_n(x) > A - \varepsilon \right\}.
\]
If $x \in E_{\varepsilon}$, then $
\limsup _{n \rightarrow \infty} \frac{1}{n} \log \phi_n(x)>A-\varepsilon,
$ hence $x \in A_n$ for infinitely many $n$. Therefore
 $E_\varepsilon \subset \limsup_{n\to\infty} A_n=\bigcap_{m=1}^{\infty} \bigcup_{n \ge m} A_n$.  Consequently, for every $m$,
\[
E_\varepsilon \subset \bigcup_{n\ge m} A_n
\]
hence
\[
E_\varepsilon \subset \bigcup_{n\ge m} (E_\varepsilon \cap A_n).
\]
As $\operatorname{Leb}(E_\varepsilon)>0$, we claim that for every $\delta>0$, there exist infinitely many $n$ such that
\[
\operatorname{Leb}(E_\varepsilon\cap A_n)\ge e^{-\delta n}.
\]

Indeed, suppose by contradiction that there exist $\delta>0$ and $N\in\mathbb N$
such that
\[
\operatorname{Leb}(E_\varepsilon\cap A_n)< e^{-\delta n}
\qquad\forall n\ge N.
\]
Then, for every $m\ge N$,
\[
\operatorname{Leb}\!\left(\bigcup_{n\ge m}(E_\varepsilon\cap A_n)\right)
\le
\sum_{n\ge m}\operatorname{Leb}(E_\varepsilon\cap A_n)
\le
\sum_{n\ge m} e^{-\delta n}.
\]
Since the geometric series converges, the right-hand side tends to $0$ as
$m\to\infty$. On the other hand,
\[
E_\varepsilon \subset \bigcup_{n\ge m}(E_\varepsilon\cap A_n)
\qquad\forall m,
\]
hence
\[
0<\operatorname{Leb}(E_\varepsilon)
\le
\operatorname{Leb}\!\left(\bigcup_{n\ge m}(E_\varepsilon\cap A_n)\right),
\]
which is a contradiction.

Therefore, for every $\delta>0$, there exists a sequence $n_k\to\infty$ such that
\[
\operatorname{Leb}(E_\varepsilon\cap A_{n_k})\ge e^{-\delta n_k}.
\]

For such $n_k$,
\[
\int \phi_{n_k}\, d\operatorname{Leb}
\ge
\int_{E_\varepsilon \cap A_{n_k}} \phi_{n_k}\, d\operatorname{Leb}
\ge
\operatorname{Leb}(E_\varepsilon \cap A_{n_k})\, e^{n_k(A-\varepsilon)}
\ge
e^{n_k(A-\varepsilon-\delta)}.
\]
Hence
\[
\frac{1}{n_k}\log \int \phi_{n_k}\, d\operatorname{Leb}
\ge
A-\varepsilon-\delta.
\]
Taking the $\limsup$ yields
\[
\limsup_{n\to\infty} \frac{1}{n}\log \int \phi_n\, d\operatorname{Leb}
\ge
A-\varepsilon-\delta.
\]
Since $\varepsilon>0$ and $\delta>0$ are arbitrary,
\begin{equation}\label{lower-bound-entropy}
    \limsup_{n \to \infty} \frac{1}{n} \log \int \phi_n(x)\, d\operatorname{Leb}(x)
\ge A.
\end{equation}

 Combining \eqref{upper bound-entropy} and \eqref{lower-bound-entropy}, we conclude
\[
\ess \sup \limsup_{n \to \infty} \frac{1}{n} \log \phi_n(x) 
\;\le\; h_{\mathrm{top}}(f) 
\;\le\; \limsup_{n \to \infty} \frac{1}{n} \log \ess \sup \phi_n(x).
\]

 \end{proof}
\color{black}

\section{Proofs}\label{proofs}

\subsection{Proof of Theorem \ref{mainresult3}}\label{SRB0}

We recall that $(X,T)$ is a homeomorphism on a compact manifold $X$ and that $\Leb$ is a Lebesgue measure normalized so that $\Leb(X)= 1.$  Let $\A: X \to \glr$ be a matrix cocycle over $(X, T)$.

As $\chi(., \A)$ is upper semi-continuous with respect to weak* topology and  $\mathcal{O}_T$ is weak* compact by Theorem \ref{total}, we have
\begin{equation}\label{max obser measure}
    \sup_{\nu \in \mathcal{O}_T}\chi(\nu, \A)=\max_{\nu \in \mathcal{O}_T}\chi(\nu, \A).
\end{equation}

Similar to Lyapunov maximizing measures, we define maximizing observable measures as follows:
\begin{defn}\label{def_LMO}
An observable measure $\mu$ is called a {\it maximizing observable measure} for the matrix cocycle $\A:X \to \glr$ over $(X, T)$ if \[
\chi(\mu, \A)=  \sup_{\nu \in \mathcal{O}_T}\chi(\nu, \A)=\max_{\nu \in \mathcal{O}_T}\chi(\nu, \A).
\]
\end{defn}

By \eqref{max obser measure}, a maximizing observable measure always exists. Similar to \eqref{maximal LE} and \eqref{max2}, we obtain the following theorem for observable measures.
 \begin{thm}\label{main-conj}
 Let $\mathcal{A}:X \to \glr$ be a matrix cocycle over a homeomorphism $(X,T)$. Let $\mu$ be an ergodic maximizing observable measure. Suppose that the cocycle $\mathcal{A}$ has a 1-dominated splitting on the support $\mu$. Then
\begin{equation}\label{mainn}
\ess \sup \limsup_{n\rightarrow \infty} \frac{1}{n}\log \|\mathcal{A}^{n}(x)\|=\sup_{\mu \in \mathcal{O}_T}\chi(\mu, \A)
\end{equation}
\begin{equation}\label{ineq for the LMO}
\hspace{5.9cm}\leq \limsup_{n\rightarrow \infty}\frac{1}{n}\ess \sup \log \|\mathcal{A}^{n}(x)\|,
\end{equation}
where the $\ess \sup$ denotes the essential supremum taken with respect to the Lebesgue measure $\Leb.$
\end{thm}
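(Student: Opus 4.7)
The statement splits into three inequalities, where I abbreviate $L^{*}(x) := \limsup_{n\to\infty} \frac{1}{n}\log\|\A^{n}(x)\|$: (I) $\ess\sup L^{*} \leq \sup_{\mu\in\mathcal{O}} \chi(\mu,\Phi_{\A})$; (II) $\sup_{\mu\in\mathcal{O}}\chi(\mu,\Phi_{\A}) \leq \ess\sup L^{*}$; and (III) $\sup_{\mu\in\mathcal{O}}\chi(\mu,\Phi_{\A}) \leq \limsup_{n} \frac{1}{n}\ess\sup \log\|\A^{n}\|$. Parts (I) and (III) do not use the dominated splitting hypothesis; the substance of the theorem lies in (II).

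For (I), Theorem \ref{total} says $\mathcal{G}$ has full Lebesgue measure. For any $x \in \mathcal{G}$, choose a subsequence $n_{k}$ along which $\frac{1}{n_{k}}\log\|\A^{n_{k}}(x)\| \to L^{*}(x)$, and pass to a further subsequence $n_{k_{j}}$ with $\gamma_{n_{k_{j}},x} \to \mu \in V(x) \subset \mathcal{O}$. The subadditive part of Theorem \ref{con1} applied with $\nu_{n} := \delta_{x}$ (so that $\mu_{n} = \gamma_{n,x}$) then gives $L^{*}(x) \leq \chi(\mu,\Phi_{\A}) \leq \sup_{\mathcal{O}}\chi$, and taking essential suprema yields (I). Part (III) is immediate: the Riemannian volume on the compact manifold $X$ has full topological support, so $\ess\sup \log\|\A^{n}\| = \sup \log\|\A^{n}\|$ by continuity of $\A$; the second identity in \eqref{maximal LE} together with $\mathcal{O}\subset \mathcal{M}(X,T)$ yields $\sup_{\mathcal{O}}\chi \leq \beta(\Phi_{\A}) = \limsup_{n} \frac{1}{n}\sup \log\|\A^{n}\|$.

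For (II), let $\mu^{*}$ denote the ergodic Lyapunov maximizing observable measure, set $Y := \operatorname{supp}(\mu^{*})$, and write $\chi^{*} := \chi(\mu^{*},\Phi_{\A})$. By Section \ref{PH-dominated}, the dominated splitting with index $1$ on $Y$ corresponds to a $1$-dimensional invariant cone field on $Y$, and Proposition \ref{prop:add} applied to $\A|_{Y}$ shows that $\Phi_{\A}|_{Y}$ is almost additive. Lemma \ref{AAP} then affords continuity of $\chi(\cdot,\Phi_{\A})$ at $\mu^{*}$ among measures supported on $Y$, and the almost additive case of Theorem \ref{con1} becomes available. Fix $\varepsilon > 0$. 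By observability, $\Leb(B_{\varepsilon}(\mu^{*})) > 0$ and hence $\Leb(B_{\varepsilon}(\mu^{*})\cap \mathcal{G}) > 0$; for $x$ in this set pick $\nu_{x} \in V(x) \subset \mathcal{O}$ with $d(\nu_{x},\mu^{*}) < \varepsilon$, realized along $\gamma_{n_{k},x}\to \nu_{x}$. Applying the almost additive part of Theorem \ref{con1} to $\nu_{n} := \delta_{x}$, and controlling the limit via Lemma \ref{AAP}, yields $\lim_{k} \frac{1}{n_{k}}\log\|\A^{n_{k}}(x)\| = \chi(\nu_{x},\Phi_{\A}) \geq \chi^{*} - \omega(\varepsilon)$ with $\omega(\varepsilon)\to 0$. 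Hence $L^{*}(x) \geq \chi^{*} - \omega(\varepsilon)$ on a Lebesgue-positive set, so $\ess\sup L^{*} \geq \chi^{*} - \omega(\varepsilon)$; letting $\varepsilon \to 0$ finishes (II).

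The delicate step, and the main obstacle, is justifying the last invocation of Theorem \ref{con1} and Lemma \ref{AAP}: Proposition \ref{prop:add} furnishes almost additivity only on $Y$, which in general has zero Lebesgue measure, while a Lebesgue-generic $x \in B_{\varepsilon}(\mu^{*})$ and its orbit need not lie in $Y$. The remedy is a localization argument. Since $\gamma_{n_{k},x} \to \nu_{x}$ is weakly close to $\mu^{*}$, the density of indices $j$ for which $T^{j}(x)$ lies in a prescribed neighborhood $U$ of $Y$ tends to one; a continuous extension of the invariant cone field from $Y$ to $U$, combined with the uniform continuity of $\A$, yields an approximate multiplicative lower bound of the type $\|\A^{m+n}(x)\| \geq \kappa'\,\|\A^{m}(x)\|\cdot\|\A^{n}(T^{m} x)\|$ along the orbit, with the error absorbed into $\omega(\varepsilon)$. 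Executing this extension carefully and tracking the error is the heart of the argument and is precisely where the dominated splitting hypothesis is indispensable.
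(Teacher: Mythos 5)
Your parts (I) and (III) coincide with the paper's Lemma \ref{one_side} and are fine. The problem is part (II), which is the only place the dominated-splitting hypothesis enters, and there your argument stops exactly where the proof has to begin. You correctly diagnose the obstruction --- Proposition \ref{prop:add} gives almost additivity of $\Phi_{\A}$ only on $Y=\operatorname{supp}(\mu^{*})$, which is typically Lebesgue-null, while the points you must control are Lebesgue-generic points of $B_{\varepsilon}(\mu^{*})$ whose orbits need not meet $Y$ --- but the proposed remedy (extend the cone field to a neighborhood of $Y$, use that the orbit spends asymptotic density close to one there, and ``absorb the error into $\omega(\varepsilon)$'') is only asserted, not carried out. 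It is not a routine verification: the multiplicative lower bound is only available during the stretches of orbit inside the neighborhood, the number of excursions outside it up to time $n$ can grow linearly in $n$ even when their density is small, and nothing in the hypotheses bounds $\log\|\A^{n}\|$ from below during an excursion, so it is not clear that the accumulated losses are $o(n)$, let alone controlled by a modulus $\omega(\varepsilon)$. Moreover, your invocations of Theorem \ref{con1} (almost-additive case) and of Lemma \ref{AAP} for $\nu_{x}$ and $\mu^{*}$ both presuppose almost additivity on a single closed invariant set carrying both measures and the orbit of $x$; your $x$ does not live on such a set. As written, part (II) is a plan rather than a proof.

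The paper's own argument avoids this issue by never leaving the support: for each full-Lebesgue-measure set $U$ it selects a closed invariant set $D_{U}\subset\mathcal{G}\cap U\cap B_{\delta}(\mu^{*})\cap\operatorname{supp}(\mu^{*})$ on which the dominated splitting holds, so that Proposition \ref{prop:add} gives almost additivity on $D_{U}$ and Lemma \ref{AAP} gives continuity of $\chi(\cdot,\Phi_{\A})$ on $\mathcal{M}(D_{U},T_{|D_{U}})$; it then takes its witness point $y$ inside $D_{U}$ and runs Theorem \ref{con1} for the empirical measures of $y$ entirely within $D_{U}$. This is a genuinely different mechanism from yours: the paper evaluates the upper limit at points of $\operatorname{supp}(\mu^{*})$, where the hypothesis applies verbatim, rather than at Lebesgue-generic points whose orbits merely shadow the support statistically. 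To salvage your route you would need a quantitative version of Proposition \ref{prop:add} valid on a neighborhood of $Y$ together with an explicit estimate of the contribution of the excursions; neither is supplied in your proposal.
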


\color{black}
\begin{rem}

The inequality in the
formula \eqref{ineq for the LMO} could be strict (see Example \ref{res}).
\end{rem}

We are now in a position to prove Theorem \ref{mainresult3}. In fact, since this is a consequence of Theorem \ref{main-conj}, we shall prove it later.

Assume that $\mu$ is an invariant measure which may not be ergodic. We consider its ergodic decomposition. By the affine property of the Lyapunov exponent, the Lyapunov exponent of $\mu$ equals the weighted average of the exponents of its ergodic components (see \cite[Proposition A.1 (3)]{FH}). Therefore, there exists at least one ergodic component whose Lyapunov exponent equals the Lyapunov exponent of $\mu$. One may use this fact in the proof below.
\color{black}

\begin{proof}[Proof of Theorem \ref{main-conj}]
    
\color{black}
Lemma \ref{one_side} implies that one side of the equality \eqref{mainn} and the inequality \eqref{ineq for the LMO} hold.
Now, we explain how one can tackle the other side of the equality \eqref{mainn} in Theorem \ref{main-conj}.

 Let $\mathcal{G} \subset X$ such that $V(x) \subseteq \mathcal{O}_T$ for any $x \in \mathcal{G}$. By Theorem \ref{total}, $\operatorname{Leb}(\mathcal{G})=1$. We fix a subset $L\subset X$ with the full Lebesgue measure.

The set of observable measures $\mathcal{O}_T$ is weak* compact (see Theorem \ref{total}), hence there is a maximizing observable measure $\mu$ for the matrix cocycle $\A$. Let $\mu$ be an ergodic maximizing observable measure. Moreover, we recall that we assume the cocycle $\mathcal{A}$ has a 1-dominated splitting  on $Y:=\text{supp}(\mu)$ (the support of $\mu$). By the robustness of dominated splittings (see \cite[Corollary 2.8]{CP}), there exists a closed neighborhood $U$ of $\text{supp}(\mu)$ such that  $Z:=\bigcap_{k \in \Z} T^{-k}(U) \supseteq \text{supp}(\mu)$ is the maximal invariant set in this neighborhood. Then, the restricted bundle $E_Z$ over the compact invariant set $Z$ admits a dominated splitting $\mathbb{F} \oplus \mathbb{G}$ such that $\dim(\mathbb{F})=1$, extending the previously found dominated splitting on $E_Y$.  Therefore, the cocycle $\A$  has 1-dominated splitting  on $Z.$

We choose $\delta$ small enough such that $B_{\delta}(\mu) \subset Z$ (see Remark \ref{approaching}). Note that the basin of $\delta$-attraction of $\mu$, $B_{\delta}(\mu)$, has positive Lebesgue measure by the definition of the observable measure.

 Dominated splitting can be characterized in terms of the existence
of invariant cone fields. Therefore, there is a 1-dimensional family of invariant cones $(C_{x})_{x\in Z}$ such that 
\[\A(x)(C_{x}) \subset C_{T(x)}^{o}.\]
 Hence, $\mu \to \chi(\mu, \A)$ is continuous on $\mathcal{M}(Z, T_{|Z})$ by Lemma \ref{AAP}.

There is a compact set $W$ with $\Leb(W)>0$ such that $W \subset B_{\delta}(\mu)$. Let $y \in W \subset B_{\delta}(\mu) \subset Z \cap L \cap \mathcal{G}$. Hence, there is a subsequence of integers $\{n_{i}\}$ such that $\frac{1}{n_{i}}\sum_{j=0}^{n_{i}-1}\delta_{T^{j}(y)}$ converges to a measure $\nu$ with $d(\mu, \nu)<\delta$ (distance in weak* topology) by definition.

 Now, we prove the other side of the equality \eqref{mainn}.
$$ \begin{aligned}
\limsup_{n\rightarrow \infty}\frac{1}{n}\log \|\mathcal{A}^{n}(y)\| &\geq \limsup_{i\rightarrow \infty}\frac{1}{n_{i}}\log \|\mathcal{A}^{n_{i}}(y)\|\\&=\underbrace{\limsup_{i\rightarrow \infty} \frac{1}{n_{i}}\int \log \|\mathcal{A}^{n_{i}}(y)\| d\delta_{y}}_{\text{\circled{1}}}.
\end{aligned}
$$

Since $\frac{1}{n_{i}}\sum_{j=0}^{n_{i}-1}\delta_{T^{j}(y)} \rightarrow \nu$ (in weak*topology) and there is the  1-dimensional family of invariant cones $(C_{x})_{x\in Z}$, by Theorem \ref{con1},
\[
\circled{1}=\underbrace{\chi(\nu, \A)}_{\text{\circled{2}}}.
\]
By continuity (Lemma \ref{AAP}), for given $\epsilon>0,$
\[\circled{2} \geq \chi(\mu, \A)-\epsilon.\]

That implies
\[ \sup_{x\in L} \limsup_{n\rightarrow \infty} \frac{1}{n}\log \|\mathcal{A}^{n}(x)\| \geq \chi(\mu, \A)- \epsilon.\]

As we consider arbitrary $L$ and $\epsilon,$
\begin{equation}\label{one-side-proof}
\ess \sup \limsup_{n\rightarrow \infty} \frac{1}{n}\log \|\mathcal{A}^{n}(x)\| \geq \sup_{\mu \in \mathcal{O}_T}\chi(\mu, \A).
\end{equation} 
Combining \eqref{one-side-proof} and Lemma \ref{one_side} completes the proof of the theorem.
\end{proof}
\color{black}
\begin{cor}\label{Obs=phy}
Under conditions of Theorem \ref{main-conj}, we assume that the set of observable measures $\mathcal{O}_T$ is at most countable, then we have
\[\ess \sup \limsup_{n\rightarrow \infty} \frac{1}{n}\log \|\mathcal{A}^{n}(x)\|=\sup_{\mu \in \mathcal{O}_T}\chi(\mu, \A)=\sup_{\mu \in \textbf{Phys}}\chi(\mu, \A).\]
\end{cor}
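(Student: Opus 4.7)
The first equality is immediate from Theorem \ref{main-conj}, so the plan is to establish the second equality
\[
\sup\{\chi(\mu,\Phi_{\mathcal{A}}) \mid \mu \in \mathcal{O}\} = \sup\{\chi(\mu,\Phi_{\mathcal{A}}) \mid \mu \in \textbf{Phy}\}.
\]
Since $\textbf{Phy} \subseteq \mathcal{O}$, the inequality $\sup_{\textbf{Phy}}\chi \leq \sup_{\mathcal{O}}\chi$ is automatic, so the task reduces to the reverse. I would split into two cases according to the cardinality of $\mathcal{O}$: if $\mathcal{O}$ is finite, Theorem \ref{finite} yields $\mathcal{O} = \textbf{Phy}$ and the desired equality is tautological.

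The interesting case is when $\mathcal{O}$ is countably infinite. By Theorem \ref{countable}, there is a countable family of physical measures $\{\nu_i\}_{i\in\mathbb{N}} \subseteq \textbf{Phy}$ whose basins $B(\nu_i)$ union to a full-Lebesgue subset $U$ of $X$, and $\mathcal{O}$ is the weak$^{\ast}$ closure of this family. Let $\mu^{*}$ denote the ergodic Lyapunov maximizing observable measure realising $\sup_{\mathcal{O}}\chi(\cdot,\Phi_{\mathcal{A}})$ (existing by Remark \ref{exsitence-LMO}), and fix $\epsilon > 0$. The dominated splitting hypothesis of Theorem \ref{main-conj}, together with Proposition \ref{prop:add}, makes $\Phi_{\mathcal{A}}$ almost additive on $\text{supp}(\mu^{*})$; Lemma \ref{AAP} then furnishes $\delta > 0$ such that
\[
|\chi(\nu,\Phi_{\mathcal{A}}) - \chi(\mu^{*},\Phi_{\mathcal{A}})| < \epsilon
\]
for any $T$-invariant measure $\nu$ supported in $\text{supp}(\mu^{*})$ with $d(\nu,\mu^{*}) < \delta$. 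Feeding this choice of $U$ and $\delta$ into the extraction step already performed in the proof of Theorem \ref{main-conj}, I would produce a closed $T$-invariant set
\[
D \subseteq \mathcal{G} \cap U \cap B_{\delta}(\mu^{*}) \cap \text{supp}(\mu^{*})
\]
on which the cocycle $\mathcal{A}$ is dominated, and pick any $y \in D$. Since $y \in U$, it lies in some basin $B(\nu_j)$, forcing $\gamma_{n,y} \to \nu_j$ in the weak$^{\ast}$ topology and $V(y) = \{\nu_j\}$; the $T$-invariance of the closed set $D$ together with the Portmanteau theorem gives $\text{supp}(\nu_j) \subseteq D \subseteq \text{supp}(\mu^{*})$, while $y \in B_{\delta}(\mu^{*})$ yields $d(\nu_j,\mu^{*}) < \delta$. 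By the choice of $\delta$ one then has $\chi(\nu_j,\Phi_{\mathcal{A}}) > \chi(\mu^{*},\Phi_{\mathcal{A}}) - \epsilon$, and since $\nu_j \in \textbf{Phy}$ this gives $\sup_{\textbf{Phy}}\chi \geq \sup_{\mathcal{O}}\chi - \epsilon$; letting $\epsilon \downarrow 0$ finishes the argument.

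The main delicate point I anticipate is the non-vacuity of $D$ and the inheritance of the support inclusion $\text{supp}(\nu_j) \subseteq \text{supp}(\mu^{*})$ for any empirical limit produced by an orbit in $D$. Both are already contained in the extraction step performed inside the proof of Theorem \ref{main-conj} and may be invoked rather than redone. Once $D$ and the witness $y \in D$ are in hand, the remainder is a diagram chase combining Lemma \ref{AAP}, Proposition \ref{prop:add}, and the defining property of the basin of a physical measure.
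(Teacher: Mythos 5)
Your proposal is correct and follows the paper's skeleton: both reduce to the second equality via Theorem \ref{main-conj} and then split on the cardinality of $\mathcal{O}$, handling the finite case identically through Theorem \ref{finite}. The difference is in the countably infinite case: the paper simply invokes Theorem \ref{countable} ($\mathcal{O}$ is the weak$^{\ast}$ closure of $\textbf{Phy}$) and declares the equality ``clear,'' whereas you re-run the extraction machinery from the proof of Theorem \ref{main-conj} --- the invariant dominated set $D$, almost additivity via Proposition \ref{prop:add}, and the continuity modulus of Lemma \ref{AAP} --- to manufacture a physical measure $\nu_j$ supported in $\operatorname{supp}(\mu^{*})$ with $\chi(\nu_j,\Phi_{\mathcal{A}})>\chi(\mu^{*},\Phi_{\mathcal{A}})-\epsilon$. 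Your argument is sound (the Portmanteau step giving $\operatorname{supp}(\nu_j)\subseteq D$ and the bound $d(\nu_j,\mu^{*})<\delta$ from $V(y)=\{\nu_j\}$ both check out), and it is arguably more honest than the paper's one-liner, since mere upper semi-continuity of $\chi(\cdot,\Phi_{\mathcal{A}})$ goes the wrong way when passing from $\textbf{Phy}$ to its closure. That said, you are using heavier tools than necessary: in the countably infinite case the basins $B(\nu_i)$ cover Lebesgue-a.e.\ $x$, and for $x\in B(\nu_i)$ the subadditive upper bound of Theorem \ref{con1} (exactly as in the proof of Lemma \ref{one_side}) gives $\limsup_n \frac{1}{n}\log\|\mathcal{A}^{n}(x)\|\leq\chi(\nu_i,\Phi_{\mathcal{A}})$, hence $\ess\sup\limsup_n \frac{1}{n}\log\|\mathcal{A}^{n}(x)\|\leq\sup_{\textbf{Phy}}\chi$; combined with the trivial inclusion $\textbf{Phy}\subseteq\mathcal{O}$ and the first equality of Theorem \ref{main-conj}, all three quantities are sandwiched, with no need for the dominated set $D$ or Lemma \ref{AAP} at this stage.
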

\begin{proof}
It is enough to prove that \[\sup_{\mu \in \mathcal{O}_T}\chi(\mu, \A)=\sup_{\mu \in \textbf{Phys}}\chi(\mu, \A),\]
by Theorem \ref{main-conj}.

If the set of observable measures $\mathcal{O}_T$ is finite, then $\mathcal{O}_T=\textbf{Phys}$ by Theorem \ref{total}, 
and if $\mathcal{O}$ is countable infinite, then $\mathcal{O}_T$ is the weak*closure of the set of $\textbf{Phys}$ measures by Theorem \ref{total}. In both cases, the equality is clear.
\end{proof}
\begin{rem}
The physical measures may not exist if the set $\mathcal{O}_T$ is uncountable. For instance,
the identity map on a manifold has observable measures but has no physical measures. Hence, the
above result fails in general when the set $\mathcal{O}$ is uncountable.
\end{rem}

\begin{proof}[Proof of Theorem \ref{mainresult3}]
The proof follows from Theorem \ref{main-conj} by considering $Df^{-1}_{|F(x)}=\A(x)$.
\end{proof}

The next example shows that the inequality \eqref{ineq for the LMO} can be strict.

\begin{ex}\label{res}
Let $f$ be a transitive $C^{1+\alpha}$ Anosov diffeomorphism
and $\mu$ be the unique observable measure, which is the 
 unique SRB measure. There are infinitely many other
ergodic measures, which are not observable (e.g. those supported on the periodic orbits). 
Hence, by \cite[Theorem 1]{J}, one can find a continuous function whose integral with respect to
some ergodic measure is larger than its integral with respect to any observable measure $\mu$.
\end{ex}

\color{black}

\subsection{Proof of Theorem \ref{mainresult1}}\label{SRB}
Now we are in a position to prove Theorem \ref{mainresult1}.

The starting point in the proof is the following result by
Tian \cite{Ti}, which shows that the negative Lyapunov exponents of all observable measures imply non-uniform expansion.
\begin{thm}[{{\cite[Proposition A]{Ti}}}]\label{eqi:NUE}Let $T:X\to X$ be a continuous map on a compact manifold $X$. Assume that $a_n: X \to \R$ and
$b_n: X\to \R$, $n\geq 0$, are two sequences of continuous functions such that for every $x\in X$,$ n,k \geq 0$,
\begin{equation}\label{subadditive}
a_{n+k}(x)\leq a_n(T^k(x))+ a_k(x),\quad  b_{n+k}(x) \leq b_n (T^k(x))+ b_k (x);
\end{equation}
and
\begin{equation}\label{subadditive1}
a_n (x)\leq a_{n+k}(x) + b_k (T^n(x)).
\end{equation}

If 
\[ \lim_{n\to \infty} \frac{1}{n}\int a_{n}(x)d\mu(x)<0\]
for every $\mu \in \mathcal{O}_T$, then there are $\lambda >0$ and $K\in \N$ such that Lebesgue almost every $x$
satisfies
\[\limsup_{n \to \infty}\frac{1}{nK}\sum_{i=0}^{n-1}a_{K}(T^{iK}(x))\leq -\lambda<0.\]
\end{thm}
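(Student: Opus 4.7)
The plan is to combine three ingredients: upper semi-continuity of the Lyapunov exponent functional with weak* compactness of $\mathcal{O}$ to extract a uniform bound, the Catsigeras--Enrich transport of this bound to Lebesgue-a.e.\ $x$, and finally the $b$-inequality as a bridge from a Birkhoff-type bound at unit step to the $K$-subsampled statement in the conclusion.

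First, subadditivity together with continuity of each $a_n$ makes $\mu \mapsto \chi(\mu) := \lim_n \tfrac{1}{n}\int a_n\,d\mu$ upper semi-continuous on $\mathcal{M}(X,T)$. Since $\mathcal{O}$ is weak*-compact by Theorem \ref{total} and $\chi<0$ on $\mathcal{O}$, put $-3\lambda_0 := \sup_{\mu \in \mathcal{O}}\chi(\mu)<0$. By Fekete's lemma, $\chi(\mu) = \inf_n\tfrac{1}{n}\int a_n\,d\mu$, so for each $\mu\in\mathcal{O}$ pick $K_\mu$ with $\tfrac{1}{K_\mu}\int a_{K_\mu}\,d\mu<-\tfrac{5\lambda_0}{2}$; weak* continuity of $\nu\mapsto\int a_{K_\mu}\,d\nu$ produces an open $V_\mu\ni\mu$ on which this persists with $-2\lambda_0$ on the right. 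A finite subcover $V_{\mu_1},\dots,V_{\mu_N}$ of $\mathcal{O}$ and $K:=\mathrm{lcm}(K_{\mu_1},\dots,K_{\mu_N})$, combined with the iterated pointwise subadditivity $a_K(x)\le\sum_{j=0}^{\ell_i-1} a_{K_{\mu_i}}(T^{jK_{\mu_i}}(x))$ for $K=\ell_i K_{\mu_i}$, yield an open $U\supset\mathcal{O}$ with $\int a_K\,d\nu<-2\lambda_0 K$ for every $\nu\in U$.

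By Theorem \ref{total}, $\mathcal{G}=\{x:V(x)\subset\mathcal{O}\}$ has full Lebesgue measure; since $U$ is open with $\mathcal{O}\subset U$, for each $x\in\mathcal{G}$ the empirical measures $\gamma_{n,x}$ lie in $U$ for all $n\ge N(x)$, so
\[ \frac{1}{n}\sum_{j=0}^{n-1} a_K(T^j(x)) = \int a_K\,d\gamma_{n,x} < -2\lambda_0 K \qquad (n\ge N(x)). \]
The sum in the conclusion is the Birkhoff sum for $S:=T^K$ of the continuous function $a_K$. Apply Theorem \ref{total} to $(X,S)$: on a full-Lebesgue set the $S$-empirical measures $\beta_{n,x}^{S}:=\tfrac{1}{n}\sum_{i=0}^{n-1}\delta_{S^i(x)}$ accumulate only in the $S$-observable set $\mathcal{O}^{S}$. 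From the identity $\gamma_{nK,x} = \tfrac{1}{K}\sum_{r=0}^{K-1} T^r_*\beta_{n,x}^{S}$ one checks that for $\nu\in\mathcal{O}^{S}$ the $T$-symmetrization $\bar\nu:=\tfrac{1}{K}\sum_{r=0}^{K-1}T^r_*\nu$ lies in $\mathcal{O}$, whence $\int a_K\,d\bar\nu<-2\lambda_0 K$.

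To bridge from $\bar\nu$ back to $\nu$ is the essential role of the $b$-hypothesis. Combining the subadditivity $a_{r+K}(x)\le a_r(x)+a_K(T^r(x))$ with $a_K(x)\le a_{K+r}(x)+b_r(T^K(x))$ gives the pointwise inequality
\[ a_K(x)\le a_K(T^r(x))+a_r(x)+b_r(T^K(x)) \qquad (0\le r<K); \]
integrating against the $T^K$-invariant $\nu$ and averaging over $r$ yields
\[ \int a_K\,d\nu \;\le\; \int a_K\,d\bar\nu + \frac{1}{K}\sum_{r=0}^{K-1}\int(a_r+b_r)\,d\nu. \]
The main obstacle is that the error term is a priori $O(K)$, the same order as the main term $-2\lambda_0 K$. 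The resolution is to enlarge $K$ (still consistent with the $\mathrm{lcm}$ construction): by Fekete, $\tfrac{1}{K}\int a_K\,d\mu\searrow\chi(\mu)\le -3\lambda_0$, and the finite cover from the second paragraph together with a Dini-type uniformization on the compact set $\mathcal{O}$ makes this convergence uniform, so that for $K$ sufficiently large the main term strictly dominates the error and $\int a_K\,d\nu\le -\lambda K$ holds on $\mathcal{O}^{S}$ for some $\lambda\in(0,\lambda_0)$. Weak*-continuity of $\nu\mapsto\int a_K\,d\nu$ together with $V^{S}(x)\subset\mathcal{O}^{S}$ for Lebesgue-a.e.\ $x$ then transfers this uniform negative bound to the $S$-Birkhoff averages, giving the required $\limsup_n\tfrac{1}{nK}\sum_{i=0}^{n-1}a_K(T^{iK}(x))\le -\lambda$ almost surely.
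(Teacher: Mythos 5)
A preliminary remark on the comparison you asked for: the paper contains no proof of this statement at all --- it is imported verbatim as \cite[Proposition A]{Ti} --- so your attempt can only be judged on its own terms. Most of it is sound. The extraction of a single $K$ with $\int a_K\,d\mu<-2\lambda_0 K$ for every $\mu\in\mathcal{O}$ (upper semi-continuity of $\chi$, Fekete, finite subcover, $\mathrm{lcm}$, using that members of $\mathcal{O}$ are $T$-invariant), the passage to an open $U\supset\mathcal{O}$ and to $\gamma_{n,x}\in U$ for large $n$ at a.e.\ $x$, the identity $\gamma_{nK,x}=\frac{1}{K}\sum_{r=0}^{K-1}T^r_*\beta^S_{n,x}$, the verification that the symmetrization $\bar\nu$ of a $T^K$-observable $\nu$ is $T$-observable, and the inequality $\int a_K\,d\nu\le\int a_K\,d\bar\nu+\frac{1}{K}\sum_{r=0}^{K-1}\int(a_r+b_r)\,d\nu$ are all correct.

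The proof breaks at exactly the point you yourself flag as ``the main obstacle,'' and the proposed resolution does not work. The error term $\frac{1}{K}\sum_{r=0}^{K-1}\int(a_r+b_r)\,d\nu$ is of order $K$ with coefficient governed by $\sup_X(a_1+b_1)$: by subadditivity $a_r+b_r\le r\,\sup_X(a_1+b_1)$, and this linear growth is genuinely attained in the intended application, where $a_r(x)+b_r(x)=\log\bigl(\|Df^r_{|E^{cu}(x)}\|/m(Df^r_{|E^{cu}(x)})\bigr)$ is the log condition number, which the paper's own index-$1$ domination hypothesis forces to grow linearly in $r$ on the relevant support whenever $\dim E^{cu}\ge 2$. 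The main term, on the other hand, is at best about $-3\lambda_0 K$ no matter how large $K$ is: the Fekete/Dini uniformization only moves the constant from $-2\lambda_0$ toward $-3\lambda_0$, it does not change the order in $K$. Since both terms scale linearly in $K$ and the error coefficient bears no relation to $\lambda_0$, ``enlarging $K$'' cannot make the main term dominate, and the needed bound $\int a_K\,d\nu\le-\lambda K$ on $\mathcal{O}^S$ is not established. The underlying difficulty is structural: a bound on $\int a_K\,d\bar\nu$ only controls the \emph{average} of the $K$ phase integrals $\int a_K\circ T^r\,d\nu$, and the $b$-inequality only limits how fast these phases drift (by $\sup(a_1+b_1)$ per unit of $r$), so nothing prevents the $r=0$ phase from being the one exceptional non-negative term. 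Passing from the $T$-averaged bound to the $T^K$-subsampled conclusion requires a different mechanism --- this is precisely the nontrivial content of Tian's Proposition A --- and as written your argument does not supply it.
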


\begin{proof}[Proof of Theorem \ref{mainresult1}]
    
\color{black}

Note that condition \eqref{LE00} is equivalent to 
\begin{equation}\label{equal1}
\ess \sup \limsup_{n\to \infty}\frac{1}{n}\log\|Df^{-n}_{|E^{cu}(f^{n}(x))}\|<0.
\end{equation}
On the other hand,
\begin{equation}\label{optimization for PHS over observable}
    \ess \sup \limsup_{n\to \infty}\frac{1}{n}\log\|Df^{-n}_{|E^{cu}(f^{n}(x))}\|=\max_{\mu \in \mathcal{O}_f}\lambda^{-}(\mu, f),
\end{equation}
by Theorem \ref{mainresult3}. Therefore, by \eqref{equal1} and \eqref{optimization for PHS over observable},
\begin{equation}\label{equal11}
\lambda^{-}(\mu, f) <0
\end{equation}
for any $\mu \in \mathcal{O}_f.$
We denote $a_{n}(x):=\log \|Df_{|_{E^{cu}(f^{n}(x))}}^{-n}\|$ and $b_n(x):=\log \|Df_{|E^{cu}(x)}^{n}\|$. They satisfy conditions \eqref{subadditive} and \eqref{subadditive1} of Theorem \ref{eqi:NUE}. Moreover, Lyapunov exponents of all observable measures are negative (see \eqref{equal11}). Then there are $\lambda>0$ and $K \in \mathbb{N}$ such that Lebesgue a.e. $x$ satisfies
\[
\limsup _{n \rightarrow+\infty} \frac{1}{n K} \sum_{i=1}^{n} \log \left\|D f_{|E^{cu}(f^{i K}(x))}^{-K}\right\| \leq-\lambda<0,
\]
 by Theorem \ref{eqi:NUE}.
This completes the proof of the first part of theorem. The second part of the proof follows from the combination of the first part of the Theorem and  \cite[Theorem A]{ADLP}.

\end{proof}

\subsection{Proof of Corollary~\ref{equal O=SRB}}

We aim to show that 
\[
\mathcal{O}_f = \mathbf{Phys} = \mathbf{SRB}.
\]

By Theorem~\ref{mainresult1}, there exist finitely many ergodic SRB measures for $f^K$, say $\mu_1, \ldots, \mu_k$, whose basins cover a set of full Lebesgue measure, where $K \ge 1$ is the integer given by Theorem~\ref{mainresult1}. 

To obtain measures that are invariant under $f$, we define, for each $i$ and any Borel set $B$,
\[
\nu_i(B) := \frac{1}{K} \sum_{j=0}^{K-1} \mu_i(f^{-j}(B)).
\]

Then each $\nu_i$ is $f$-invariant, ergodic, and an SRB measure for $f$. Note that if $\mu_i$ is already $f$-invariant, then the formula gives $\nu_i = \mu_i$. In general, it may happen that $\nu_i = \nu_j$ for some $i \neq j$. The set $\{\nu_1, \dots, \nu_k\}$ forms a compact subset of $\mathcal{O}_f$, and the union of their basins still has full Lebesgue measure.

Finally, by Theorem~\ref{total}, $\mathcal{O}_f$ coincides with these finitely many measures $\nu_i$. The result then follows from Corollary~\ref{Obs=phy}.
\color{black}

\medskip
 \textbf{\textit{Acknowledgments.}}
The author is grateful to Jos{\'e} Alves, Davi Obata, David Burguet, Snir Ben Ovadia, and Kiho Park for useful discussions. He also thanks Paulo Varandas for his invaluable assistance in preparing the introduction of the paper. This work was supported by the Knut and Alice Wallenberg Foundation.


\bibliographystyle{acm}
\bibliography{SRB-new}
\end{document}